\title{Induced Random $\beta$-transformation}
\begin{document}
\newtheorem{Theorem}{Theorem}[section]
\newtheorem{Proposition}{Proposition}[section]
\newtheorem{Lemma}{Lemma}[section]
\newtheorem{Corollary}{Corollary}[section]
\newtheorem{Conjecture}{Conjecture}[section]
\newtheorem{Definition}{Definition}[section]
\newtheorem{Exercise}{Exercise}[section]
\newtheorem{Example}{Example}[section]
\newtheorem{Examples}{Examples}[section]
\newtheorem{Remark}{Remark}[section]
\newtheorem{Remarks}{Remarks}[section]

\newcommand{\R}{\mathbb R}
\newcommand{\Q}{\mathbb Q}
\newcommand{\Z}{\mathbb Z}
\newcommand{\N}{\mathbb{N}}
\newcommand{\lbetar}{\lfloor \beta \rfloor}

\author{Simon Baker
\and
Karma Dajani}

\address{
Department of Mathematics and Statistics, University of Reading, Reading,  RG6 6AX, UK. 
E-mail: simonbaker412@gmail.com}
\address{
Department of Mathematics, Utrecht University, 3508 TA Utrecht, The Netherlands.
E-mail: k.dajani1@uu.nl}
\date{}
\subjclass[2010]{11A63, 37A45}
\keywords{$\beta$-expansions, First return maps, L\"uroth transformations}
\begin{abstract}
In this article we study the first return map defined on the switch region induced by the greedy and lazy maps. In particular we study the allowable sequences of return times, and when the first return map is a generalised L\"uroth series transformation. We show that there exists a countable collection of disjoint intervals $(\mathcal{I}_{n})_{n=1}^{\infty},$ such that all sequences of return times are permissible if and only if $\beta\in \mathcal{I}_{n}$ for some $n$. Moreover, we show that there exists a set $M\subseteq(1,2)$ of Hausdorff dimension $1$ and Lebesgue measure zero, for which the first return map is a generalised L\"uroth series transformation if and only if $\beta\in M$.

\end{abstract}
\maketitle

\section{Introduction}
Let $\beta\in(1,2)$ and $I_{\beta}:=[0,\frac{1}{\beta-1}]$. Given $x\in I_{\beta}$ we call a sequence $(b_{n})_{n=1}^{\infty}\in\{0,1\}^{\mathbb{N}}$ a $\beta$-expansion for $x$ if $$x=\sum_{n=1}^{\infty}\frac{b_{n}}{\beta^{n}}.$$ Non-integer representations of real numbers were pioneered in the papers of R\'enyi \cite{Renyi} and Parry \cite{Parry}. Since then they have been studied by many authors and have connections with ergodic theory, fractal geometry, and number theory (see the survey articles \cite{Kom} and \cite{Sid2}). Perhaps one of the most interesting objects to study within expansions in non-integer bases is the set of expansions, i.e., $$\Sigma_{\beta}(x):=\Big\{(b_{n})_{n=1}^{\infty}\in\{0,1\}^{\mathbb{N}}:\sum_{n=1}^{\infty}\frac{b_{n}}{\beta^{n}}=x\Big\}.$$ A result of Sidorov states that given $\beta\in(1,2)$ then Lebesgue almost every $x\in I_{\beta}$ satisfies $\textrm{card }\Sigma_{\beta}(x)=2^{\aleph_{0}}$ \cite{Sid}. Moreover, for any $k\in\mathbb{N}\cup\{\aleph_{0}\}$ there exists $\beta\in(1,2)$ and $x\in I_{\beta}$ such that $\textrm{card } \Sigma_{\beta}(x)=k,$ see \cite{DaKa,EHJ,EJ}. The situation described above is completely different to the case of integer base expansions where every number has a unique expansion except for a countable set of exceptions which have precisely two.

A useful observation when studying expansions in non-integer bases is that a $\beta$-expansion has a natural dynamical interpretation. Namely, let $T_0(x)=\beta x,$ $T_1(x)=\beta x-1$,
 and $$\Gamma_{\beta}(x):=\Big\{(a_{n})_{n=1}^{\infty}\in \{T_{0},T_{1}\}^{\mathbb{N}}:(a_{n}\circ \cdots \circ a_{1})(x)\in I_{\beta}
 \textrm{ for all } n\in\mathbb{N}\Big\}.$$ It was shown in \cite{Baker} that $\textrm{card }\Sigma_{\beta}(x)=\textrm{card }\Gamma_{\beta}(x)$ and the map sending $(b_{n})$ to $(T_{b_{n}})$ is a bijection between these two sets. As such, performing the map $T_{0}$ corresponds to taking the digit $0,$ and $T_{1}$ corresponds to taking the digit $1$. An all encompassing method by which we can use the maps $T_{0}$ and $T_{1}$ to generate $\beta$-expansions is the {\it random $\beta$-transformation}. This map is defined as follows. Set $\Omega=\{0,1\}^{\mathbb{N}}$ and denote by $\sigma$ the left shift on $\Omega$.
Consider the transformation $K_{\beta}:\Omega \times [0, \displaystyle\frac{1}{\beta -1}]\to \Omega \times [0, \displaystyle\frac{1}{\beta -1}]$ defined by
\[ K_{\beta} (\omega, x) = \left\{
\begin{array}{ll}
(\omega, T_0 x), & \text{if } 0\le x<\frac{1}{\beta},\\
\\
(\sigma \omega, T_{\omega_1} x), & \text{if } \frac{1}{\beta}\le x \le \frac{1}{\beta (\beta-1)},\\
\\
(\omega, T_1 x), & \text{if } \frac{1}{\beta (\beta-1)}< x\le \frac{1}{\beta -1}.
\end{array}
\right.\]

The random $\beta$-transformation $K_{\beta}$ was introduced and studied in \cite{DK,DdV1,DdV2}. Given $x\in I_{\beta}$, the map $K_{\beta}$ generates all possible $\beta$-expansions of $x$. Furthermore, it is a random mix of the classical greedy and lazy maps defined by

\[ G_{\beta} (x) = \left\{
\begin{array}{ll}
T_0 (x), & \text{if } 0\le x<\frac{1}{\beta},\\
\\
T_1 (x), & \text{if } \frac{1}{\beta} \le x\le \frac{1}{\beta -1},
\end{array}
\right.\]
and
\[ L_{\beta} (x) = \left\{
\begin{array}{ll}
T_0 (x), & \text{if } 0\le x<\frac{1}{\beta (\beta-1)},\\
\\
T_1 (x), & \text{if } \frac{1}{\beta (\beta-1)} \le x\le \frac{1}{\beta -1}
\end{array}
\right.\]
respectively. Let $S:=[\displaystyle\frac{1}{\beta}, \displaystyle\frac{1}{\beta (\beta-1)}]$, we refer to $S$ as the {\it switch region}.  This is the region where the greedy map $G_{\beta}$ and lazy map $L_{\beta}$ differ, and is the region where the coordinates of $\omega$ are used to decide which map to use. Understanding the dynamics of the maps $T_{0}$ and $T_{1}$ on the switch region provides valuable insight into the possible $\Gamma_{\beta}(x),$ and thus the possible $\Sigma_{\beta}(x)$.

This paper is concerned with the dynamics of the first return map defined on the switch region. We consider the induced transformation $U_{\beta}$ of $K_{\beta}$ on the set $\Omega \times S$. More precisely,  $U_{\beta}: \Omega \times S\to \Omega \times S$ is defined as follows: $$U_{\beta}(\omega,x):=K_{\beta}^{r_1(\omega,x)}(\omega,x), \textrm{ where }
r_1(\omega,x)=\inf\{m\ge 1: K_{\beta}^m(\omega,x)\in \Omega\times S\}.$$  Similarly we set
$$U_{\beta,0}(x):=U_{\beta}((0)^{\infty},x) \textrm{ and } U_{\beta,1}(x):=U_{\beta}((1)^{\infty},x).$$ Note that when we have fixed the sequence $\omega$ to equal $(0)^{\infty}$ or $(1)^{\infty}$ the maps $U_{\beta,0}$ and $U_{\beta,1}$ are well defined maps from $S$ to $S$.

\begin{Remark}
The map $U_{\beta}$ is defined on $\Omega\times S,$ and both $U_{\beta,0}$ and $U_{\beta,1}$  are defined on $S$. However, there exists $\omega$ and $x$ for which $K_{\beta}(\omega,x)$ is never mapped back into $\Omega\times S$, thus for this choice of $\omega$ and $x$ the map $U_{\beta}$ is not well defined. Similarly, there exists $x$ for which $U_{\beta,\omega_{i}}$ is not well defined. However, it is a consequence of the work of Sidorov \cite{Sid} that the set of $x$ for which $U_{\beta}^{n}(w,x)$ is well defined for all $n\in\mathbb{N}$ and $\omega\in \Omega$ is of full Lebesgue within $S$. Similarly, the set of $x$ for which $U_{\beta,\omega_{i}}^{n}(x)$ is well defined for every $n\in\mathbb{N}$ is of full Lebesgue measure within $S$. Throughout this article we will abuse notation and let $S$ denote both the switch region and the full measure subset of $S$ for which $U_{\beta}$ and $U_{\beta,\omega_{i}}$ are well defined. It should be clear which interpretation of $S$ we mean from the context.
\end{Remark}

For $i\ge 1$ let
$r_i(\omega,x):=r_1(U_{\beta}^{i-1}(\omega,x))$ be the $i$th return time to the switch region $\Omega\times S$. Note that for any $\beta$ and $\omega,$ the set $\{r_{1}(\omega,x)\}_{x\in S}$ equals $\mathcal{R}_{\beta}:=\{m,m+1,\ldots\}$ where $m$ is some natural number that only depends upon $\beta$. We emphasise that $\mathcal{R}_{\beta}$ has no dependence on $\omega$. One of the goals of this paper is to understand the sequences $(r_i(\omega,x))_{i=1}^{\infty}$ and to answer the following question: given $\omega\in \Omega$ and a sequence of integers $(j_{i})_{i=1}^{\infty}\in \mathcal{R}_{\beta}^{\mathbb{N}}$, when is it possible to find $x\in S$ such that $r_i(\omega,x)=j_i$, for $i=1,2,\ldots$? The following theorem provides an answer to this question. Before we state this theorem we have to introduce two classes of algebraic integers. Let $\alpha_{k}$ denote the unique solution in $(1,2)$ of the equation $$x^{k+1}-2x^{k}+x-1=0,$$ and let $\gamma_{k}$ denote the $k$-th multinacci number. Recall that the $k$-th multinacci number is the unique root of $$x^{k+1}-x^{k}-x^{k-1}-\cdots -x-1=0$$ contained in $(1,2)$.

\begin{Theorem}
\label{Free return times}
Let $\beta\in (\alpha_{k},\gamma_{k}]$ for some $k\geq 2,$ then for any $\omega\in\Omega$ and $(j_{i})\in \mathcal{R}_{\beta}^{\mathbb{N}}$ there exists $x\in S$ such that $r_i(\omega,x)=j_i$, for $i=1,2,\ldots$. Moreover, if $\beta\notin (\alpha_{k},\gamma_{k}]$ for all $k\geq 2,$ then there exists $\omega\in\Omega$ and $(j_{i})\in \mathcal{R}_{\beta}^{\mathbb{N}}$ such that no $x\in S$ satisfies $r_i(\omega,x)=j_i$, for $i=1,2,\ldots$.
\end{Theorem}

As we will see, the algebraic properties of $\alpha_{k}$ and $\gamma_{k}$ correspond naturally to conditions on the orbit of $1$ and its reflection $\frac{1}{\beta-1} -1$. These points determine completely the dynamics of the greedy map $G_{\beta}$ and lazy map $L_{\beta}$ respectively, and hence it is not surprising that these points play a crucial role in our situation as well. For values of $\beta$ lying outside of the intervals $(\alpha_{k},\gamma_{k}]$ it is natural to ask whether the following weaker condition is satisfied: given $(j_{i})\in\mathcal{R}_{\beta}^{\mathbb{N}}$ does there exist $\omega\in\Omega$ and $x\in S$ such that $r_i(\omega,x)=j_i$ for $i=1,2,\ldots$. Let $\eta_{k}$ denote the unique root of the equation $$2x^{k+1}-4x^{k}+1=0$$ contained in $(1,2).$

\begin{Theorem}
\label{Restricted omega}
Let $\beta\in (\alpha_{k},\eta_{k}]$ for some $k\geq 1$, then for any sequence $(j_{i})\in\mathcal{R}_{\beta}^{\mathbb{N}}$ there exists $\omega\in\Omega$ and $x\in S$ such that $r_i(\omega,x)=j_i$ for $i=1,2,\ldots$.
\end{Theorem}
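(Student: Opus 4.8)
The plan is to reduce the problem to a statement about the greedy and lazy orbits of the endpoints of the switch region, and then to exploit the freedom in choosing $\omega$ to enlarge the set of reachable points after a prescribed return time. Let me set up the local picture. When $(\omega,x)\in\Omega\times S$, applying $K_\beta$ once uses the coordinate $\omega_1$, so $K_\beta(\omega,x)=(\sigma\omega,T_{\omega_1}x)$; after that the orbit stays outside $\Omega\times S$ for $r_1-1$ further steps, during which $K_\beta$ acts deterministically (either as $T_0$ below the switch region or as $T_1$ above it, exactly as $G_\beta$ resp.\ $L_\beta$ would on that side), and then it re-enters $S$. Concretely, starting from $x\in S$ and choosing the first digit $b\in\{0,1\}$ via $\omega_1$, the point $T_b x$ lies in $[0,1/\beta)$ or in $(1/(\beta(\beta-1)),1/(\beta-1)]$ (the two components of $I_\beta\setminus S$), and the return time is governed by how many greedy (resp.\ lazy) steps are needed to bring $T_b x$ back into $S$. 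So I would first record, for each $j\in\mathcal R_\beta$, the set $W_j^{0}$ of points $y\in S$ that can be written as $y=G_\beta^{\,j-1}(T_0 x)$ for some $x\in S$ with the intermediate orbit avoiding $S$, and the analogous set $W_j^{1}$ coming from the lazy branch; the key combinatorial fact is that each of these is a finite union of intervals whose position and length are computable from $\beta$, and that $W_j^{0}\cup W_j^{1}$ — the set of points reachable with return time exactly $j$ \emph{when we are allowed to pick the first digit freely} — is what matters here, in contrast to Theorem \ref{Free return times} where $\omega$, hence the first digit, is prescribed.

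The heart of the argument is then a covering/surjectivity claim: for every $j\in\mathcal R_\beta$ the map that sends $x\in S$ to the pair $(\text{set of admissible first digits},\ \text{resulting return point})$ has image whose projection to $S$ is all of $S$, i.e.\ $\bigcup_{b}\{y\in S: y=(\text{deterministic orbit of }T_b x)\text{ re-enters }S\text{ after exactly }j\text{ steps}\}=S$. Granting this for a single step, one builds the required $x$ and $\omega$ by a nested-interval construction: given the target sequence $(j_i)$, choose at stage $i$ a subinterval of $S$ on which the first return time equals $j_i$ and the first digit is a fixed value $b_i$ determined by the stage-$i$ choice; set $\omega$ to be the concatenation $b_1 b_2 b_3\cdots$ (this is exactly where the "there exists $\omega$" in the statement is used — we get to read off $\omega$ from the construction rather than having it handed to us), and take $x$ to be the unique point in the resulting decreasing intersection of compact intervals, which is nonempty because each of the relevant branch maps is expanding so the diameters shrink to $0$.

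This reduces everything to verifying the one-step covering claim, and that is where the hypothesis $\beta\in(\alpha_k,\eta_k]$ enters and where I expect the main obstacle to lie. The condition $\beta>\alpha_k$ should be the same one as in Theorem \ref{Free return times} (it guarantees that $\mathcal R_\beta$ is an interval of integers starting at the right place and that the short return times behave well), while the upper bound $\gamma_k$ there is now relaxed to $\eta_k$, the root of $2x^{k+1}-4x^k+1=0$; the point is that $\eta_k>\gamma_k$, so this genuinely covers more $\beta$. I would compute the orbit of $1$ under $G_\beta$ and of $\frac1{\beta-1}-1$ under $L_\beta$ for the first few steps, express the lengths $|W_j^0|,|W_j^1|$ and the gaps between consecutive reachable intervals in terms of powers of $\beta$, and show that the inequality "the union of the greedy-reachable and lazy-reachable pieces at level $j$ leaves no gap inside $S$" holds precisely when $\beta\le\eta_k$ — the equation $2x^{k+1}-4x^k+1=0$ should fall out as the exact threshold at which two such pieces meet end-to-end (the factor $2$ reflecting that we now have \emph{two} branches to cover with, versus the single branch available when $\omega$ is fixed). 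The bookkeeping for intermediate return times $j$ strictly between the minimal value and $\infty$, and checking that no gap opens at \emph{any} level simultaneously, is the delicate part; I would handle it by an induction on $j$ that tracks a single "worst-case gap" quantity and shows it stays nonpositive under the stated bound on $\beta$.
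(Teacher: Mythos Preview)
Your key lemma is right and matches the paper's: for every $j\in\mathcal R_\beta$ the union of the two reachable sets covers $S$, i.e.\ $(T_1^{\,j-1}\circ T_0)(B_j^0)\cup(T_0^{\,j-1}\circ T_1)(B_j^1)=S$. In fact this needs no induction on $j$: for $j=k+1$ it is exactly the cross-over inequality $(T_1^{k}\circ T_0)(1/\beta)\le 1/(2(\beta-1))\le(T_0^{k}\circ T_1)(1/(\beta(\beta-1)))$, which is equivalent to $\beta\le\eta_k$, while for $j\ge k+2$ already a single branch is onto $S$. So your last paragraph is aiming at a harder target than necessary.

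The genuine gap is in the construction step. You propose to pick a \emph{single} digit $b_i$ at each stage and set $\omega=b_1b_2\cdots$. But the one-step covering lemma only says that the \emph{union} of the two images covers $S$; it does not say that either image alone does. Concretely, after stage~1 with $j_1=k+1$ and $b_1=0$, the forward image of your interval is the proper subinterval $[\,(T_1^k\circ T_0)(1/\beta),\,1/(\beta(\beta-1))\,]\subsetneq S$. For the next step you need this subinterval to meet $B_{j_2}^{0}$ or $B_{j_2}^{1}$; nothing in the covering lemma guarantees that, and for $\beta$ near $\eta_k$ and suitable $j_2$ the relevant $B_{j_2}^{b}$ can lie entirely in the missing left portion. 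Iterating, the single-branch image can keep shrinking and the greedy construction stalls.

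The fix in the paper is to abandon the single-choice scheme and keep \emph{both} branches alive at every stage: at level $i$ one carries a family of up to $2^i$ intervals $\{\mathcal I_l^i\}$ together with the associated compositions $f_l^i$, and maintains the invariant $\bigcup_l f_l^i(\mathcal I_l^i)=S$. This invariant propagates precisely because the union over $l$ recovers all of $B_{j_{i+1}}^{0}$ and $B_{j_{i+1}}^{1}$, so the one-step covering applies cleanly. The nested intersection $J=\bigcap_i\bigcup_l\mathcal I_l^i$ is then nonempty, and for $x\in J$ one does \emph{not} read off $\omega$ directly; instead one takes, for each $n$, a finite string $(\omega_1^n,\dots,\omega_n^n)$ witnessing the first $n$ returns, pads it to an element of $\Omega$, and passes to a convergent subsequence using compactness of $\Omega$. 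The limit $\omega$ then satisfies $r_i(\omega,x)=j_i$ for all $i$. This compactness extraction is exactly what replaces your ``concatenate the $b_i$'' step.
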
 If $\beta$ satisfies the hypothesis of Theorem \ref{Restricted omega} then the orbit of $1$ and $\frac{1}{\beta-1}-1$ satisfy a \emph{cross over property.} This cross over property is sufficient to prove Theorem \ref{Restricted omega}. Note that $\alpha_{k}\leq \gamma_{k}\leq \eta_{k}$ for each $k\geq 1$. We include a tables of values for $\alpha_{k}$, $\gamma_{k}$ and $\eta_{k}$ in Figure \ref{fig2}.
\begin{figure}[t]
\centering \unitlength=0.70mm
\begin{tabular}{| l | l | l | l |}
    \hline
    $k$ & $\alpha_{k}$ & $\gamma_{k}$ & $\eta_{k}$ \\ \hline
    $1$ & $\frac{1+\sqrt{5}}{2}$ & $\frac{1+\sqrt{5}}{2}$ & $1+2^{-1/2}$  \\ \hline
    $2$ & $1.7549\ldots$ & $1.8393\ldots$ & $1.8546\ldots$ \\ \hline
    $3$ & $1.8668\ldots$ & $1.9276\ldots$ & $1.9305\ldots$ \\ \hline
    $4$ & $1.9332\ldots$ & $1.9660\ldots$ & $1.9666\ldots$ \\ \hline
    $5$ & $1.9672\ldots$ & $1.9836\ldots$ & $1.9837\ldots$ \\ \hline
    \end{tabular}
\caption{Tables of values for $\alpha_{k}$, $\gamma_{k}$ and $\eta_{k}$}
    \label{fig2}
\end{figure}

The second half of this paper is concerned with the maps $U_{\beta,0}$ and $U_{\beta,1}.$ Before we state our results it is necessary to make a definition. Given a closed interval $[a,b]$, we call a map $T: [a,b]\to [a,b]$ a generalized L\"uroth series transformation (abbreviated to GLST) if there exists a countable set of bounded subintervals $\{I_{n}\}_{n=1}^{\infty}$ ($I_{n}=(l_n,r_n),[l_n,r_n],(l_n,r_n],[l_n,r_n)$) for which the following criteria are satisfied:
\begin{enumerate}
\item $I_{n}\cap I_{m}=\emptyset$ for $n\neq m$.
  \item $\sum_{n=1}^{\infty}(r_{n}-l_{n})=b-a$.
  \item $$T(x)= a +\frac{(x-l_{n})(b-a)}{r_{n}-l_{n}}$$ for $x\in I_{n}$.
\end{enumerate}Property $(3)$ is equivalent to the map $T$ restricted to the interval $I_{n}$ being the unique surjective linear orientation preserving map from the interval $I_{n}$ into $S$.

The traditional L\"uroth expansion of a number $x\in(0,1]$ is a sequence of natural numbers $(a_{n})_{n=1}^{\infty}$ where each $a_{n}\geq 2$ and $$x= \frac{1}{a_{1}}+\frac{1}{a_{1}(a_{1}-1)a_{2}}+\cdots+\frac{1}{a_{1}(a_{1}-1)a_{2}(a_{2}-1)\cdots a_{n}}+\cdots.$$ This L\"uroth expansion $(a_{n})$ can be seen to be generated by the map $T:[0,1]\to[0,1]$ where
$$T(x)=
\begin{cases}
    n(n+1)x-n,& \text{if } x\in(\frac{1}{n+1},\frac{1}{n}]\\
    0,              & \text{if } x=0
\end{cases} $$
GLST's were introduced in \cite{BBDK}. Our definition is slightly different to that appearing in this paper but all of the main results translate over into our context. Namely if $T:[a,b]\to[a,b]$ is a GLST then the normalised Lebesgue measure on $[a,b]$ is a $T$-invariant ergodic measure. Our main result for the maps $U_{\beta,0}$ and $U_{\beta,1}$ is the following theorem.

\begin{Theorem}
\label{Luroth return map}
There exists a set $M\subseteq(1,2)$ of Hausdorff dimension 1 and Lebesgue measure zero such that:
\begin{enumerate}
  \item If $\beta\in M$ then both $U_{\beta,0}$ and $U_{\beta,1}$ are GLSTs.
  \item If $\beta\notin M$ then both $U_{\beta,0}$ and $U_{\beta,1}$ are not GLSTs.
\end{enumerate}
\end{Theorem}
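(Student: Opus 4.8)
The plan is to identify the two return maps explicitly, reduce the property of being a GLST to a single condition on a forward $G_{\beta}$-orbit, and then establish the metric and dimension statements for the resulting parameter set. \emph{Reduction to full branches and to one map.} Setting $\omega=(0)^{\infty}$ (resp. $(1)^{\infty}$) in the definition of $K_{\beta}$ one checks that $U_{\beta,0}$ is the first return map of the lazy map $L_{\beta}$ to $S$ and $U_{\beta,1}$ is the first return map of the greedy map $G_{\beta}$ to $S$. Since $T_{0}$ and $T_{1}$ both have slope $\beta>0$, on each maximal interval where the return map follows a single formula $G_{\beta}^{r}$ (resp. $L_{\beta}^{r}$) it is an \emph{increasing} affine map into $S$; thus orientation preservation, one of the defining conditions of a GLST, is automatic, and by the Remark these branch intervals cover $S$ up to a null set, so $\sum(r_{n}-l_{n})=|S|$ is automatic too. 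Hence $U_{\beta,i}$ is a GLST iff every branch is \emph{full}, i.e. is the affine surjection onto all of $S$. Moreover, with $\phi(x)=\tfrac1{\beta-1}-x$ one has $\phi\circ T_{0}=T_{1}\circ\phi$, $\phi\circ T_{1}=T_{0}\circ\phi$, and $\phi(S)=S$, so $\phi\circ G_{\beta}=L_{\beta}\circ\phi$ and $\phi$ conjugates $U_{\beta,1}$ to $U_{\beta,0}$; conjugating an increasing affine surjection of $S$ by the orientation-reversing affine involution $\phi$ yields again an increasing affine surjection of $S$ (the two reflections cancel), so $U_{\beta,1}$ is a GLST iff $U_{\beta,0}$ is. It therefore suffices to describe $M:=\{\beta\in(1,2):U_{\beta,1}\text{ is a GLST}\}$.

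\emph{The combinatorial characterisation.} Write $p=\tfrac1\beta$, $q=\tfrac1{\beta(\beta-1)}$, so $S=[p,q]$ and $G_{\beta}(q)=\tfrac{2-\beta}{\beta-1}$. Follow the forward $G_{\beta}$-images of $S$: an image interval inside $[0,p)$ is stretched by $\beta$; when it crosses $p$ it splits, the part in $[0,p)$ stretching so as to include $[0,1)$ and the part in $[p,\tfrac1{\beta-1}]$ shifting under $T_{1}$; when this last part crosses $q$, the sub-interval lying in $S$ has \emph{returned}, while that lying in $(q,\tfrac1{\beta-1}]$ continues under $T_{1}$. Following the endpoints through this process, one shows that they stay in the forward orbit $\{G_{\beta}^{n}(q)\}_{n\ge0}$, the orbit of the left critical value $\beta p=1$, and the fixed points $0,\tfrac1{\beta-1}$, and that a returning sub-interval fails to cover all of $S$ precisely when at the moment it enters $S$ one of these endpoints---necessarily a point of the orbit of $q$ or of $1$---lies strictly between $p$ and $q$. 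Since $\phi$ essentially interchanges the orbits of $q$ and $1$ and fixes $(p,q)$ as a set, the two resulting conditions coincide, and the upshot is
\[
\beta\in M\quad\Longleftrightarrow\quad G_{\beta}^{n}(q)\notin\Big(\tfrac1\beta,\tfrac1{\beta(\beta-1)}\Big)\ \text{ for every }n\ge1 .
\]
For $\beta<\gamma_{1}$ already $G_{\beta}(q)\in(p,q)$, so $M\subseteq[\gamma_{1},2)$; the same first-return computation gives $M\subseteq\{\gamma_{1}\}\cup\bigcup_{k\ge2}(\alpha_{k},\gamma_{k}]$, inside the parameter region of Theorem~\ref{Free return times}. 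Conversely, for each multinacci number $\gamma_{k}$ the orbit of $q$ under $G_{\beta}$ is periodic with all values in $[0,p]\cup\{q\}$, so $\gamma_{k}\in M$ and $M$ accumulates at $2$.

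\emph{Measure and dimension of $M$.} For the Lebesgue measure, fix a compact $K\subset(\gamma_{1},2)$, on which $q-p=\tfrac{2-\beta}{\beta(\beta-1)}$ is bounded below. On each of its monotonicity intervals the map $\beta\mapsto G_{\beta}^{n}(q)$ is real-analytic and expands at a rate comparable to $\beta^{n}$, and a bounded-distortion estimate shows that a definite fraction of each such interval is carried into the moving target $(p,q)$, uniformly in the past; hence the set of $\beta\in K$ whose orbit of $q$ avoids $(p,q)$ up to time $N$ has Lebesgue measure decaying geometrically in $N$, so $|M\cap K|=0$, and $|M|=0$ on letting $K\uparrow(\gamma_{1},2)$. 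For the Hausdorff dimension, note that as $\beta\to2$ the excluded interval $(p,q)$ has length $O(2-\beta)\to0$; for $\beta$ just below $2$ the itineraries permitting the orbit of $q$ to avoid this shrinking hole form a subshift of finite type whose topological entropy tends to $\log2$, and transporting this subshift to parameter space via the monotone, $\beta^{n}$-expanding maps $\beta\mapsto G_{\beta}^{n}(q)$ produces, for each $\varepsilon>0$, a self-similar-type subset of $M$ of Hausdorff dimension exceeding $1-\varepsilon$. Hence $\dim_{H}M=1$ while $|M|=0$, which is the assertion.

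\emph{The main obstacle.} The crux is the combinatorial characterisation: one must organise the bookkeeping of the forward images of $S$ precisely enough to confirm that \emph{every} branch endpoint is governed by the orbits of $q$ and of $1$, treating the various ways such an image interval can meet $p$ and $q$. Granted this, the measure-zero statement is a fairly routine parametrised Borel--Cantelli estimate; in the dimension statement the delicate point is to show that the dimension actually reaches $1$---equivalently, that the entropy of the surviving subshift tends to $\log2$ as the hole closes---rather than stabilising at some value below $\log 2$.
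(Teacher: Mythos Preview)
Your combinatorial reduction is essentially the same as the paper's: both arguments arrive at the characterisation that $U_{\beta,i}$ is a GLST if and only if the forward orbit of the ``critical'' endpoint of $S$ (under the appropriate one of $G_{\beta}$ or $L_{\beta}$) never enters the open switch region $(p,q)$. Your use of the symmetry $\phi$ to reduce from two maps to one, and then from two orbit conditions to one, is a nice streamlining and is correct (the verification that the $G_{\beta}$-orbit of $q$ avoids $S^{0}$ iff the $G_{\beta}$-orbit of $1$ avoids $S^{0}$ does go through once one checks the boundary cases where the orbit lands exactly on $p$ or $q$). The paper carries out the same endpoint-tracking in a somewhat more explicit form, constructing the relevant family of intervals $\mathcal{I}_{j}$ and the countable set $E$ of admissible endpoints.

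Where your proposal diverges substantially from the paper is in establishing that $M$ has Hausdorff dimension $1$ and Lebesgue measure zero. The paper does \emph{not} prove these from scratch: it observes that the condition ``the $G_{\beta}$-orbit of $1$ never enters the interior of $S$'' is, up to a countable set of algebraic exceptions (those $\beta$ for which the orbit lands on $\partial S$), exactly the definition of a \emph{univoque base}, and then invokes the theorem of Erd\H{o}s and Jo\'{o} that the set of univoque bases has Hausdorff dimension $1$ and Lebesgue measure zero. You do not make this identification, and instead outline a direct parametrised Borel--Cantelli argument for measure zero and an entropy/shrinking-hole argument for dimension $1$. These outlines are reasonable in spirit, but as written they are not proofs: the bounded-distortion claim for $\beta\mapsto G_{\beta}^{n}(q)$ needs a genuine argument (the dependence on $\beta$ is not simply expanding, and one must control the parameter derivatives across the discontinuities of $G_{\beta}$), and the assertion that the topological entropy of the surviving subshift tends to $\log 2$ as $\beta\to 2$ is exactly the content of the Erd\H{o}s--Jo\'{o} result you would be reproving. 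Recognising $M$ as (essentially) the set of univoque bases is what turns this from a substantial project into a two-line citation.
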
What is more we can describe the set $M$ explicitly.

\bigskip
\noindent

Before we move on to our proofs of Theorems \ref{Free return times}, \ref{Restricted omega} and \ref{Luroth return map} we provide a worked example. Namely we consider the case where $\beta = \frac{1+\sqrt{5}}{2}.$ This case exhibits some of the important features of our later proofs.

\begin{Example}
\label{Golden ratio example}
When $\beta = \frac{1+\sqrt{5}}{2}$ then $S=[\frac{1}{\beta}, 1]$. Let $C_j=\{\omega\in \Omega: \omega_1=j\}$, $j=0,1$, then for any $\omega\in C_0$, $r_1(\omega, 1)=\infty$, and $r_1(\omega, \frac{1}{\beta})=1$, while for any $\omega\in C_1$, we have $r_1(\omega, 1)=1$ and $r_1(\omega, \frac{1}{\beta})=\infty$. If  $x\in (\frac{1}{\beta}, 1)$, then $r_1(\omega ,x)\ge 2$ for all $\omega \in \Omega$.

\medskip
Let
\begin{equation}
\label{B0*}
B_i^0:=\{x\in S:U_{\beta,0}(x)=(T_{1}^{i-1}\circ T_0)(x)\}
\end{equation}and
\begin{equation}
\label{B1*}
B_i^1:=\{x\in S:U_{\beta,1}(x)=(T_{0}^{i-1}\circ T_1)(x)\}
\end{equation}where $i\ge 2$. A simple calculation shows that
\begin{equation}
\label{B0}
B_i^0=\Big(\sum_{n=2}^{i+1}\frac{1}{\beta^{n}},\sum_{n=2}^{i+2}\frac{1}{\beta^{n}} \Big]=(T_{1}^{i-1}\circ T_0)^{-1}\Big(\frac{1}{\beta},1\Big]
\end{equation}and
\begin{equation}
\label{B1}
B_i^1=\Big[\frac{1}{\beta}+\frac{1}{\beta^{i+1}},\frac{1}{\beta}+\frac{1}{\beta^{i}})=(T_{0}^{i-1}\circ T_1)^{-1}\Big[\frac{1}{\beta},1\Big),
\end{equation}where $i\ge 2$.

The collection $\{B_i^0: i\ge 2\}$ is a partition of $(\frac{1}{\beta},1)$, and  $\{B_i^1: i\ge 2\}$ is a partition of $(\frac{1}{\beta},1)$. Equation (\ref{B0}) demonstrates that $U_{\beta,0}$ restricted to $B_i^0$ is a full branch, thus $U_{\beta,0}$ is a GLST. Similarly equation (\ref{B1}) implies $U_{\beta,1}$ is a GLSTs. We include a diagram of the graph of $U_{\beta,0}$ in Figure \ref{fig1}.

By the aforementioned results of \cite{BBDK} we know that a $GLST$ is ergodic with respect to the normalised Lebesgue measure $\mu$. As such we can state the average return time. For $\beta=\frac{1+\sqrt{5}}{2}$ Lebesgue almost every $x\in S$ satisfies

\begin{align*}
\lim_{n\to\infty}\frac{1}{n}\sum_{j=0}^{n-1}r_{j}((0)^{\infty},x)&=\lim_{n\to\infty}\frac{1}{n}\sum_{j=0}^{n-1}\sum_{i=2}^{\infty}i\chi_{B_{i}^{0}}((U_{\beta,0})^{j}(x))\\
&=\int \sum_{i=2}^{\infty}i\chi_{B_{i}^{0}}\, d\mu\\
&= 2\beta^{2}-\beta\\
&=3.6178\ldots.
\end{align*}Where in the above $\chi_{B_{i}^{0}}$ denotes the characteristic function on $B_{i}^{0}.$ Note that the result stated above holds with  $r_{j}((0)^{\infty},x)$ replaced with $r_{j}((1)^{\infty},x).$

By Theorem \ref{Free return times} we know that there exists $\omega\in \Omega$ and $(j_{i})_{i=1}^{\infty}\in R_{\frac{1+\sqrt{5}}{2}}^{\mathbb{N}}$ for which no $x$ satisfies $r_{i}(\omega,x)=j_{i}$ for $i=1,2,\ldots.$ This is essentially a consequence of the fact mentioned above that if $\omega\in C_0$ then $r_1(\omega, 1)=\infty$, and $r_1(\omega, \frac{1}{\beta})=1$, while for any $\omega\in C_1$, we have $r_1(\omega, 1)=1$ and $r_1(\omega, \frac{1}{\beta})=\infty$. This statement implies that we cannot have a return time $1$ followed by any other natural number.

\end{Example}

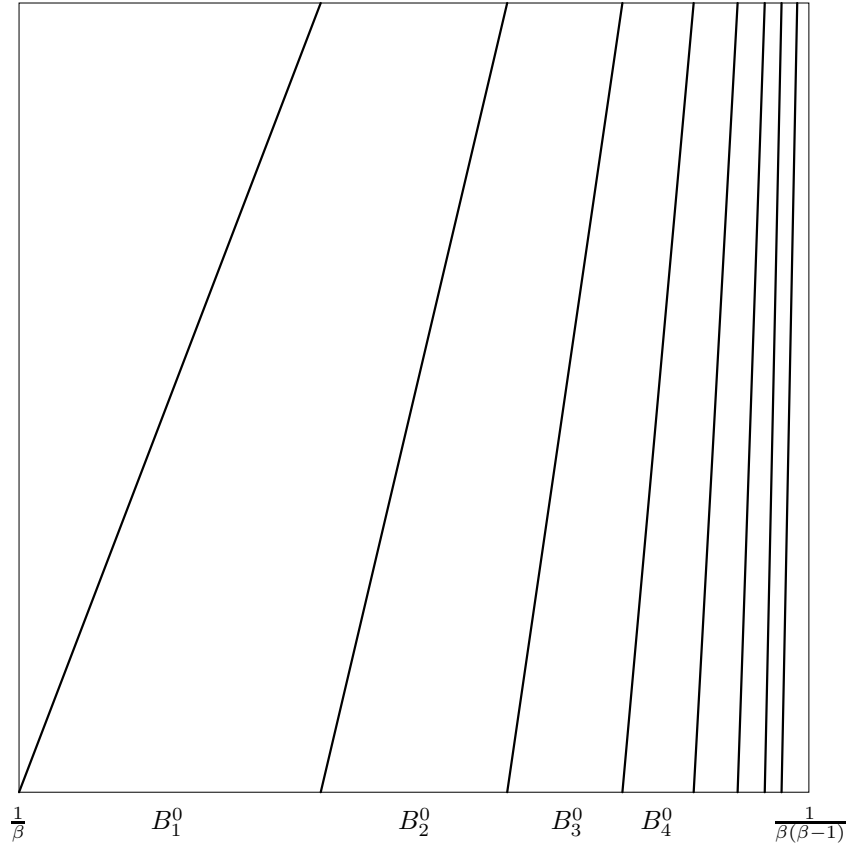
\begin{figure}[t]
\centering \unitlength=0.70mm
\begin{picture}(150,160)(0,-10)
\thinlines
\path(0,0)(0,150)(150,150)(150,0)(0,0)

\put(-2,-7){$\frac{1}{\beta}$}
\put(143,-7){$\frac{1}{\beta(\beta-1)}$}
\put(25,-7){$B_{1}^{0}$}
\put(72,-7){$B_{2}^{0}$}
\put(101,-7){$B_{3}^{0}$}
\put(118,-7){$B_{4}^{0}$}

\thicklines
\path(0,0)(57.29,150)
\path(57.29,0)(92.7,150)
\path(92.7,0)(114.58,150)
\path(114.58,0)(128.1,150)
\path(128.1,0)(136.45,150)
\path(136.45,0)(141.616,150)
\path(141.616,0)(144.80,150)
\path(144.80,0)(147.77,150)
\end{picture}
\caption{The graph of $U_{\beta,0}$ when $\beta=\frac{1+\sqrt{5}}{2}$}
    \label{fig1}
\end{figure}
\section{Sequences of return times}
\subsection{Proof of Theorem \ref{Free return times}}
In this section we prove Theorem \ref{Free return times}. The proofs of Theorems \ref{Free return times} and \ref{Restricted omega} both make use of a nested interval construction. We begin by examining the condition $\beta\in(\alpha_{k},\gamma_{k}]$. It is easy to show that the following statements hold:
\begin{equation}
\label{hop 1}
\beta\in(\alpha_{k},2) \iff (T_{1}^{k-1}\circ T_{0})\Big(\frac{1}{\beta}\Big)>\frac{1}{\beta(\beta-1)} \iff (T_{0}^{k-1}\circ T_{1})\Big(\frac{1}{\beta(\beta-1)}\Big)<\frac{1}{\beta}
\end{equation}
and
\begin{equation}
\label{hop 2}
\beta\in(1,\gamma_{k}] \iff (T_{1}^{k}\circ T_{0})\Big(\frac{1}{\beta}\Big)\leq \frac{1}{\beta}\iff (T_{0}^{k}\circ T_{1})\Big(\frac{1}{\beta(\beta-1)}\Big)\geq\frac{1}{\beta(\beta-1)}.
\end{equation}Thus $\beta\in(\alpha_{k},\gamma_{k}]$ is equivalent to the orbit of ${\frac{1}{\beta}}$ either jumping over the switch region, or satisfying $U_{\beta,0}(\frac{1}{\beta})=(T_{1}^{k}\circ T_{0})(\frac{1}{\beta})=\frac{1}{\beta}$. Similarly, $\beta\in(\alpha_{k},\gamma_{k}]$ is equivalent to the orbit of $\frac{1}{\beta(\beta-1)}$ either jumping over the switch region, or satisfying $U_{\beta,1}(\frac{1}{\beta(\beta-1)})=(T_{0}^{k}\circ T_{1})(\frac{1}{\beta(\beta-1)})=\frac{1}{\beta(\beta-1)}$. The following properties are important consequences of the above. First of all it is straightforward to see that for $\beta\in (\alpha_{k},\gamma_{k}]$ we have $R_{\beta}=\{k+1,k+2,\ldots\}$. Secondly we have
\begin{equation}
\label{Full 1}
B_{i}^{0}=(T_{1}^{i-1}\circ T_{0})^{-1}(S)
\end{equation} and
\begin{equation}
\label{Full 2}
B_{i}^{1}=(T_{0}^{i-1}\circ T_{1})^{-1}(S)
\end{equation}for $i\geq k+1$ where $B_{i}^{0}$ and $B_{i}^{1}$ are as in Example \ref{Golden ratio example}, but in this case they do not form a partition of $S$. We now prove Theorem \ref{Free return times}, we separate our proof into the following propositions..

\begin{Proposition}
\label{Prop1}
Let $\beta\in (\alpha_{k},\gamma_{k}]$ for some $k\geq 2,$ then for any $\omega\in\Omega$ and $(j_{i})\in \mathcal{R}_{\beta}^{\mathbb{N}}$ there exists $x\in S$ such that $r_i(\omega,x)=j_i$, for $i=1,2,\ldots$.
\end{Proposition}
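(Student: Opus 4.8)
The plan is to prove Proposition \ref{Prop1} by a nested interval argument. Fix $\omega\in\Omega$ and $(j_i)\in\mathcal{R}_\beta^{\mathbb{N}}$, so each $j_i\geq k+1$ since $\mathcal{R}_\beta=\{k+1,k+2,\ldots\}$ for $\beta\in(\alpha_k,\gamma_k]$. The idea is to build a decreasing sequence of nonempty closed subintervals $S=S_0\supseteq S_1\supseteq S_2\supseteq\cdots$ of the switch region, where $S_i$ is precisely the set of $x\in S$ whose first $i$ return times agree with $j_1,\ldots,j_i$; then $\bigcap_i S_i$ is nonempty by compactness and any point in it is the desired $x$. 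To make the induction run, I first need to understand how a single prescribed return time acts on a subinterval of $S$. The key observation from equations (\ref{Full 1}) and (\ref{Full 2}) is that for every $i\geq k+1$ the maps $(T_1^{i-1}\circ T_0)$ and $(T_0^{i-1}\circ T_1)$ send the associated set $B_i^0$, resp. $B_i^1$, \emph{onto all of $S$} as a full linear branch. Thus given a target interval $J\subseteq S$, the preimage $(T_1^{i-1}\circ T_0)^{-1}(J)$ is a nonempty subinterval of $B_i^0\subseteq S$, and similarly for $B_i^1$.

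The inductive step then goes as follows. Suppose we have constructed a nonempty closed interval $S_{i-1}\subseteq S$ consisting of points $x$ with $r_\ell(\omega,x)=j_\ell$ for $\ell<i$, and suppose moreover that after applying $U_\beta$ exactly $i-1$ times the image $U_\beta^{i-1}(\{\omega\}\times S_{i-1})$ is of the form $\{\sigma^{m}\omega\}\times S_{i-1}'$ for an interval $S_{i-1}'$ which, crucially, is all of $S$ (this is where the full-branch property is doing the work — each return map step resurfaces the full switch region). We must choose a subinterval on which the next return time is exactly $j_i$. Here we need to check that the digit dictated by the relevant coordinate of $\omega$ is consistent: on the switch region $K_\beta$ uses $\omega$'s first unused coordinate, say it equals $c\in\{0,1\}$, and then the first return to $\Omega\times S$ after $j_i$ steps occurs exactly when $x$ lies in $(T_{1-c}^{\,j_i-1}\circ T_c)^{-1}(S)$, which by (\ref{Full 1})/(\ref{Full 2}) is a genuine nonempty subinterval of $S$, and on it the return map equals the full linear surjection onto $S$. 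Intersecting back, $S_i$ is a nonempty closed subinterval of $S_{i-1}$, and $U_\beta^{i}(\{\omega\}\times S_i)$ is again $\{\sigma^{m'}\omega\}\times S$, preserving the inductive hypothesis.

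The main obstacle I anticipate is verifying that the return time is \emph{exactly} $j_i$ and not smaller — i.e. that along the $j_i-1$ intermediate steps the orbit genuinely jumps over the switch region rather than landing in it prematurely. This is exactly what the condition $\beta\in(\alpha_k,\gamma_k]$ buys us: by (\ref{hop 1}) the orbit of $1/\beta$ under $T_1^{k-1}\circ T_0$ overshoots $1/(\beta(\beta-1))$, and by (\ref{hop 2}) after $k$ iterates it has not yet exceeded $1/\beta$; these two facts, applied to the endpoints of the relevant preimage interval, force the intermediate iterates $T_1^\ell\circ T_0(x)$ (resp. $T_0^\ell\circ T_1(x)$) for $\ell=1,\ldots,j_i-1$ to stay outside $S$ for \emph{every} $x$ in the chosen subinterval, uniformly. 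I would make this precise by showing monotonicity: since $T_0,T_1$ are increasing, the intermediate images of an interval are intervals, and it suffices to track their endpoints, comparing them against $1/\beta$ and $1/(\beta(\beta-1))$ using the two equivalences. A small separate check handles $i=1$ (the base case, starting from $S_0=S$) and confirms that the construction is insensitive to $\omega$, since at each stage only one new coordinate of $\omega$ is consumed and both choices $c=0,1$ yield a valid nonempty subinterval. Finally, $\bigcap_{i\geq 0} S_i\neq\emptyset$ by the finite intersection property for nested nonempty compact intervals, and any $x$ in this intersection satisfies $r_i(\omega,x)=j_i$ for all $i$, completing the proof.
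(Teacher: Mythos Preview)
Your proposal is correct and follows essentially the same approach as the paper's proof: a nested interval construction in which the full-branch identities $B_i^0=(T_1^{i-1}\circ T_0)^{-1}(S)$ and $B_i^1=(T_0^{i-1}\circ T_1)^{-1}(S)$ for $i\geq k+1$ guarantee that each prescribed return time cuts out a nonempty closed subinterval which still surjects onto $S$ under the corresponding branch, and compactness gives a point in the intersection. The paper phrases this via explicit intersections of pulled-back $B_{j_i}^{\omega_i}$ sets rather than tracking images forward, and absorbs your ``exactly $j_i$, not smaller'' verification into the very definition of $B_i^{\omega}$, but the argument is the same.
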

\begin{proof}
Let $\beta\in(\alpha_{k},\gamma_{k}]$ and let us fix $(\omega_{i})\in \Omega$ and $(j_{i})\in\{k+1,k+2,\ldots,\}^{\mathbb{N}}$. We let $\mathcal{I}_{1}=B_{j_{1}}^{\omega_{1}}$ and
\begin{equation}
\label{inclusion}
\mathcal{I}_{i}:=B_{j_{1}}^{\omega_{1}}\cap (T_{\overline{\omega_{1}}}^{j_{1}-1}\circ T_{\omega_{1}})^{-1}(B_{j_{2}}^{\omega_{2}})\cap \cdots \cap \Big((T_{\overline{\omega_{i-1}}}^{j_{i-1}-1}\circ T_{\omega_{i-1}})\circ \cdots \circ (T_{\overline{\omega_{1}}}^{j_{1}-1}\circ T_{\omega_{1}})\Big)^{-1}(B_{j_{i}}^{\omega_{i}})
 \end{equation}for $i\geq 2.$ In the above and throughout we let $\overline{\omega_{i}}=1-\omega_{i}$. Any element of $\mathcal{I}_{i}$ satisfies $r_{l}(\omega,x)=j_{l}$ for $1\leq l\leq i$. Note that by Equations (\ref{Full 1}) and (\ref{Full 2}) we have $(T_{\overline{\omega_{1}}}^{j_{1}-1}\circ T_{\omega_{1}})(\mathcal{I}_{1})=S,$ by an induction argument it can be shown that
\begin{equation}
\label{Full Ei}
\Big((T_{\overline{\omega_{i}}}^{j_{i}-1}\circ T_{\omega_{i}})\circ \cdots \circ (T_{\overline{\omega_{1}}}^{j_{1}-1}\circ T_{\omega_{1}})\Big)(\mathcal{I}_{i})=S
\end{equation} for all $i\in\mathbb{N}$. Equation (\ref{Full Ei}) guarantees that $\mathcal{I}_{i}$ is nonempty and well defined for each $i\in\mathbb{N}$. Moreover, $\mathcal{I}_{i+1}\subseteq \mathcal{I}_{i}$ by equation (\ref{inclusion}). Thus $(\mathcal{I}_{i})$ is a decreasing sequence of compact intervals and $$E=\bigcap_{i=1}^{\infty}\mathcal{I}_{i}$$ is nonempty. Finally, any $x\in E$ satisfies $r_{i}(\omega,x)=j_{i}$ for all $i\in\mathbb{N}$.
\end{proof}

\begin{Proposition}
\label{Prop2}
Let $\beta\in (1,\frac{1+\sqrt{5}}{2}],$ then there exists $\omega\in\Omega$ and $(j_{i})\in \mathcal{R}_{\beta}^{\mathbb{N}}$ such that no $x\in S$ satisfies $r_i(\omega,x)=j_i$, for $i=1,2,\ldots$.
\end{Proposition}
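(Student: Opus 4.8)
\textbf{Proof proposal for Proposition \ref{Prop2}.}

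The plan is to exploit the degenerate behaviour at the endpoints of the switch region that was already observed in Example \ref{Golden ratio example}, and then show that this behaviour persists, in a suitable form, for every $\beta\in(1,\tfrac{1+\sqrt5}{2}]$. The starting observation is that for $\beta=\tfrac{1+\sqrt5}{2}$ we have $\mathcal{R}_\beta=\{1,2,\ldots\}$, and the return time $1$ is only attained at the two endpoints $\tfrac1\beta$ and $\tfrac1{\beta(\beta-1)}$ of $S$: concretely, if $\omega_1=0$ then $r_1(\omega,\tfrac1\beta)=1$ while $r_1(\omega,x)\ge 2$ for every $x\in(\tfrac1\beta,\tfrac1{\beta(\beta-1)}]$, and symmetrically if $\omega_1=1$ then $r_1(\omega,\tfrac1{\beta(\beta-1)})=1$ and $r_1(\omega,x)\ge 2$ elsewhere. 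Moreover, once you are at an endpoint, applying $K_\beta$ with the ``wrong'' first coordinate sends you to the \emph{other} endpoint and then out of $S$ forever (return time $\infty$), while applying it with the ``right'' first coordinate keeps you at the same endpoint. The first step is therefore to pin down, for general $\beta$ in the range, which points of $S$ can have a return time equal to $m:=\min\mathcal{R}_\beta$ — I expect to show this is again a very small (finite, in fact typically two-point) set, governed by where $(T_1^{m-1}\circ T_0)$ and $(T_0^{m-1}\circ T_1)$ map into $S$, i.e. by the analogues of Equations (\ref{hop 1})--(\ref{hop 2}) with $k=1$.

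Next I would choose the bad data $(\omega,(j_i))$ as follows. Pick $\omega$ with $\omega_1=0$, and take $j_1=m$ together with $j_2=$ anything in $\mathcal{R}_\beta$, say $j_2=m$ again. The claim to establish is that no $x\in S$ has $r_1(\omega,x)=m$ and $r_2(\omega,x)=m$ simultaneously. The reason: $r_1(\omega,x)=m$ forces $x$ to lie in the exceptional set identified in the first step; by the endpoint analysis this set consists of (essentially) the single point $p_0$ at the left end of the relevant branch, and $U_\beta(\omega,p_0)=(\sigma\omega,q)$ where $q$ is again an endpoint of $S$ lying on the ``$1$-side''; but then, since the first coordinate has shifted, the next application of $K_\beta$ at $q$ uses $(\sigma\omega)_1$, and for a suitable choice of the tail of $\omega$ this either launches the orbit out of $S$ (giving $r_2=\infty\notin\mathcal R_\beta$) or gives a second return time that is forced to be $\ne m$. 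Formalising this requires: (i) writing down $U_\beta$ explicitly on the exceptional set, (ii) checking the image point's position relative to $\tfrac1\beta$ and $\tfrac1{\beta(\beta-1)}$ using $\beta\le\tfrac{1+\sqrt5}2$, and (iii) choosing the tail of $\omega$ (and, if needed, adjusting $j_2$) to rule out every remaining possibility.

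The main obstacle I anticipate is the case analysis for $\beta$ strictly below the golden ratio, where the exceptional set for return time $m$ need not be literally the two endpoints of $S$, and where $\mathcal{R}_\beta$ may start at $m\ge 2$ rather than $1$. In that regime one must verify that the relevant branch $(T_1^{m-1}\circ T_0)^{-1}(S)$ still meets $S$ in at most a boundary sliver, or more precisely that the set of $x$ with $r_1(\omega,x)=m$ together with a prescribed second return is genuinely empty; this is where the inequality $\beta\le\tfrac{1+\sqrt5}{2}$ (equivalently $\beta^2\le\beta+1$, so that $T_1\circ T_0$ at $\tfrac1\beta$ does not overshoot in the way it would for larger $\beta$) has to be used quantitatively. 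A clean way to organise this is to prove the contrapositive-flavoured statement that for such $\beta$, \emph{whenever} $r_1(\omega,x)=m$, the pair $(\omega,x)$ is determined up to the shift, and then observe that the orbit of such a determined point cannot realise an arbitrary prescribed $j_2$; this reduces the whole argument to a finite computation with the first two branches of $K_\beta$, exactly mirroring the last paragraph of Example \ref{Golden ratio example}.
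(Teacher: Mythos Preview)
Your proposal has a genuine gap rooted in two factual errors about the range $\beta\in(1,\tfrac{1+\sqrt5}{2}]$. First, $\mathcal{R}_\beta=\{1,2,\dots\}$ for \emph{every} $\beta$ in this interval, not just at the golden ratio: the condition $T_0(\tfrac1\beta)=1\le \tfrac{1}{\beta(\beta-1)}$ is exactly $\beta^2\le\beta+1$, so the minimum return time is always $m=1$. Second, and more damagingly, for $\beta$ strictly below the golden ratio the set $\{x\in S: r_1((0)^\infty,x)=1\}=S\cap T_0^{-1}(S)=[\tfrac1\beta,\tfrac{1}{\beta^2(\beta-1)}]$ is a nondegenerate interval, not a finite set. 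Consequently, for $\beta$ close to $1$ there are plenty of $x\in S$ with $r_1(\omega,x)=r_2(\omega,x)=1$ under $\omega=(0)^\infty$ (indeed $T_0^2(x)\in S$ holds on a whole subinterval), so your two-step argument cannot yield the contradiction you want.

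The paper's proof sidesteps this entirely by choosing the \emph{infinite} sequence $(j_i)=(1)^\infty$ together with $\omega=(0)^\infty$: an $x\in S$ with $r_i((0)^\infty,x)=1$ for all $i$ would have to satisfy $T_0^i(x)\in S$ for every $i\ge1$, but $T_0(x)=\beta x$ with $\beta>1$ eventually pushes every point out of the bounded interval $S$. Your sketch already contains all the ingredients for this once you accept $m=1$ throughout; you simply need to pass from ``two consecutive minimal returns are impossible'' (false) to ``infinitely many consecutive returns equal to $1$ are impossible'' (true and immediate).
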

\begin{proof}
Any $\beta\in (1,\frac{1+\sqrt{5}}{2}]$ satisfies $R_{\beta}=\{1,2,\ldots\}.$ We now fix the sequence $\omega=(0)^{\infty}$ and $(j_{i})=(1)^{\infty}.$ There exists no $x\in S$ satisfying $r_{i}((0)^{\infty},x)=1$ for all $i\geq 1,$ as this would imply there exists $x\in S$ satisfying $T_{0}^{i}(x)\in S$ for all $i\geq 1.$ This is not possible as repeated iteration of $T_{0}$ eventually maps any element of $S$ outside of $S$.
\end{proof}

\begin{Proposition}
\label{Prop3}
Let $\beta\in(\frac{1+\sqrt{5}}{2},2)\setminus \cup_{k=2}^{\infty}(\alpha_{k},\gamma_{k}],$ then there exists $\omega\in\Omega$ and $(j_{i})\in \mathcal{R}_{\beta}^{\mathbb{N}}$ such that no $x\in S$ satisfies $r_i(\omega,x)=j_i$, for $i=1,2,\ldots$.
\end{Proposition}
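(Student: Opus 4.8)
\textbf{Proof proposal for Proposition \ref{Prop3}.}

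The plan is to exploit the fact that $\beta$ lies outside every interval $(\alpha_k,\gamma_k]$ to locate a return time, say $m\in\mathcal R_\beta$, and a symbol $j\in\{0,1\}$ for which the image $(T_{\overline j}^{m-1}\circ T_j)(B_m^j)$ fails to cover all of $S$ — indeed it covers only a proper subinterval. Concretely, since $\beta>\frac{1+\sqrt 5}{2}$ we have $\frac1\beta<\frac1{\beta(\beta-1)}$ strictly, and $\mathcal R_\beta=\{m,m+1,\dots\}$ for some $m\ge 1$; I would first pin down $m$ in terms of the position of $T_1^{m-1}T_0(\tfrac1\beta)$ relative to the switch region, using the equivalences in \eqref{hop 1} and \eqref{hop 2}. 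The point is that $\beta\notin(\alpha_k,\gamma_k]$ for the relevant $k$ means the orbit of $\frac1\beta$ under $U_{\beta,0}$ neither jumps cleanly over $S$ at the right step nor lands exactly on $\frac1\beta$; instead it lands strictly inside $S$ at some point $p\in(\frac1\beta,\frac1{\beta(\beta-1)})$, and symmetrically the orbit of $\frac1{\beta(\beta-1)}$ under $U_{\beta,1}$ lands at the reflected point $\frac1{\beta-1}-p$. This is exactly the failure of the ``cross over property'' alluded to before Theorem \ref{Restricted omega}.

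The second step is to translate this into a forbidden pattern of return times. Having identified $p=U_{\beta,0}(\tfrac1\beta)\in \mathrm{int}(S)$, I note that $p$ lies in some $B_q^0$ or $B_q^1$, equivalently $r_1((0)^\infty,p)=q$ for a specific $q\in\mathcal R_\beta$; and crucially, because $p$ is the image of the left endpoint under a full branch, the branch of $U_{\beta,0}$ indexed by the return time $m$ does \emph{not} surject onto $S$ but only onto $[p,\tfrac1{\beta-1})$ (or, depending on orientation, onto the complementary piece). Therefore the set of $x\in B_m^0$ with $r_2((0)^\infty,x)=j_2$ is empty whenever $B_{j_2}^0\subseteq S\setminus[p,\tfrac1{\beta-1})$, i.e. whenever $j_2$ is a return time realised only by points in $(\tfrac1\beta,p)$. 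Such a $j_2$ exists because $(\tfrac1\beta,p)$ is a nonempty open interval and hence meets infinitely many of the $B_i^0$; pick the smallest such index $j_2$. Then choosing $\omega=(0)^\infty$, $j_1=m$, $j_2$ as above, and $j_i$ arbitrary in $\mathcal R_\beta$ for $i\ge 3$, no $x\in S$ can satisfy $r_i((0)^\infty,x)=j_i$ for all $i$, since already $r_1=m,\ r_2=j_2$ is unachievable. (If the orientation of the branch forces the surjective image to be $(\tfrac1\beta,p]$ rather than $[p,\tfrac1{\beta-1})$, one simply picks $j_2$ to be a return time realised only in $(p,\tfrac1{\beta(\beta-1)})$ instead; the argument is symmetric.)

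The main obstacle I anticipate is bookkeeping: making sure that ``$\beta\notin(\alpha_k,\gamma_k]$ for all $k\ge 2$'' together with $\beta>\frac{1+\sqrt5}{2}$ genuinely forces $U_{\beta,0}(\tfrac1\beta)$ to lie in the \emph{open} interior of $S$ (and not, say, at an endpoint or outside), for the correct value of $k$ matched to $m$. This requires carefully combining the two equivalences \eqref{hop 1}–\eqref{hop 2}: from $\beta>\alpha_k$ one gets that $T_1^{k-1}T_0(\tfrac1\beta)$ overshoots $\tfrac1{\beta(\beta-1)}$, so after one more application of $T_1$ the image re-enters below; and from $\beta>\gamma_k$ (the failure of the upper bound) one gets $T_1^kT_0(\tfrac1\beta)>\tfrac1\beta$ strictly, so it lands strictly above the left endpoint of $S$. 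Together with $T_1^kT_0(\tfrac1\beta)<\tfrac1{\beta(\beta-1)}$ — which must hold for the return time to be finite and equal to $m=k+1$ — this places the landing point strictly inside $S$. The only real case analysis is whether $m$ equals $k+1$ for this $k$ or whether one should instead track $\frac1{\beta-1}-1$ and use $U_{\beta,1}$ with $\omega=(1)^\infty$; by the reflection symmetry $x\mapsto \frac1{\beta-1}-x$ interchanging $G_\beta$ and $L_\beta$, one of the two always works, so I would simply run the $\omega=(0)^\infty$ argument and invoke symmetry for the remaining subcase. Once the landing point is confirmed interior, the forbidden-pattern conclusion is immediate as above.
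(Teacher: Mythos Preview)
Your strategy of finding a forbidden two-step pattern $(j_1,j_2)$ does not go through in general. The crucial claim is that the missing piece $(\tfrac1\beta,p)$, where $p=U_{\beta,0}(\tfrac1\beta)=(T_1^{k}\circ T_0)(\tfrac1\beta)$, contains some full $B_{j_2}^0$; you justify this by saying the open interval ``meets infinitely many of the $B_i^0$''. But for $\beta$ only slightly larger than $\gamma_k$ the point $p$ is only slightly to the right of $\tfrac1\beta$, while the leftmost cylinder $B_{k+1}^0=[\tfrac1\beta,r]$ has length $\beta^{-(k+1)}\bigl(\tfrac1{\beta(\beta-1)}-p\bigr)$, which stays bounded away from zero. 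Concretely, $p<r$ whenever $p-\tfrac1\beta<\tfrac{|S|}{\beta^{k+1}+1}$, and this holds on an open right-neighbourhood of $\gamma_k$. In that regime $(\tfrac1\beta,p)$ is a proper subinterval of $B_{k+1}^0$ and meets \emph{no other} $B_i^0$; hence no $j_2$ with $B_{j_2}^0\subseteq(\tfrac1\beta,p)$ exists and no two-step pattern is forbidden.

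The paper sidesteps this by choosing the \emph{constant} sequence $(j_i)=(k+1)^\infty$ with $\omega=(0)^\infty$ and arguing dynamically: the map $T_1^{k}\circ T_0$ is linear with slope $\beta^{k+1}>1$, and since $(T_1^{k}\circ T_0)(\tfrac1\beta)=p>\tfrac1\beta$ its unique fixed point lies strictly to the left of $S$. Hence every $x\in S$ satisfies $(T_1^{k}\circ T_0)(x)>x$, and iterates increase without bound, so $(T_1^{k}\circ T_0)^n(x)$ eventually leaves $S$. This shows no $x$ can have $r_i((0)^\infty,x)=k+1$ for all $i$. In effect the paper iterates your ``missing-piece'' observation: the images $[p,\tfrac1{\beta(\beta-1)}],\,[(T_1^{k}\circ T_0)(p),\tfrac1{\beta(\beta-1)}],\ldots$ shrink from the left and eventually become empty, even though any single step may fail to exclude a full cylinder. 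Your bookkeeping in the final paragraph (pinning down $k$ via \eqref{hop 1}--\eqref{hop 2}) is correct and is exactly what the paper uses; the fix is simply to replace the two-step forbidden pattern by this iterated escaping argument.
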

\begin{proof}
For $\beta\in (\frac{1+\sqrt{5}}{2},2)$ we have $r_{1}(\omega,1/\beta)\geq 2$ for any $\omega\in C_{0}$. Moreover, by our assumption that $\beta\notin (\alpha_{k},\gamma_{k}]$ for any $k\geq 2$ we must have $$(T_{1}^{k}\circ T_{0})\Big(\frac{1}{\beta}\Big)\in\Big(\frac{1}{\beta},\frac{1}{\beta(\beta-1)}\Big]$$ for some $k\geq 1$. For such a $\beta$ we have $R_{\beta}:=\{k+1,k+2,\ldots,\}$. Let $\omega=(0)^{\infty}$ and $(j_{i})=(k+1)^{\infty},$ we now show that there exists no $x\in S$ satisfying $r_{i}((0)^{\infty},x)=k+1$ for all $i$. Since $k+1$ is the earliest return times there exists a single interval $\mathcal{I}$ for which $\mathcal{I}:=\{x\in S: r_{1}((0)^{\infty},x)=k+1\},$ moreover for any $x\in \mathcal{I}$ we have $U_{\beta,0}(x)=(T_{1}^{k}\circ T_{0})(x)$. Thus, any $x$ satisfying $r_{i}((0)^{\infty},x)=k+1$ for all $i\in\mathbb{N}$ must satisfy
\begin{equation}
\label{Contradict}
(T_{1}^{k}\circ T_{0})^{i}(x)\in S \textrm{ for all }i\in\mathbb{N}.
\end{equation} We now explain why this is not possible.

The map $T_{1}^{k}\circ T_{0}$ scales distances by a factor $\beta^{k+1}$ and satisfies $(T_{1}^{k}\circ T_{0})(x)>x$ for $x$ to the right of the fixed point of $T_{1}^{k}\circ T_{0}.$ We previously observed that $(T_{1}^{k}\circ T_{0})(1/\beta)\in(\frac{1}{\beta},\frac{1}{\beta(\beta-1)}]$ thus the fixed point of $T_{1}^{k}\circ T_{0}$ is to the left of $S$. Therefore under repeated iteration of the map $T_{1}^{k}\circ T_{0}$ every $x\in S$ is eventually mapped outside of $S$. This implies that equation (\ref{Contradict}) cannot hold and we have proved our result.
\end{proof}
Combining Propositions \ref{Prop1}, \ref{Prop2} and \ref{Prop3} we conclude Theorem \ref{Free return times}.

\subsection{Proof of Theorem \ref{Restricted omega}}
We now prove Theorem \ref{Restricted omega} our proof is similar to Theorem \ref{Free return times} in that we make use of a nested interval construction. However, with our proof we do not explicitly construct the desired $\omega,$ we can only show existence, as such our proof takes on an added degree of abstraction.

Let us start by examining the consequences of $\beta\in(\alpha_{k},\eta_{k}]$ for some $k\geq 1.$ For $k\geq 2$ we ignore the intervals $(\alpha_{k},\gamma_{k}]$ as their proof is covered by Theorem \ref{Free return times}. For $\beta$ in the remaining parameter space the following inclusions hold
\begin{align}
\label{Cross over}
(T_{1}^{k}\circ T_{0})\Big(\frac{1}{\beta}\Big)&\in \Big(\frac{1}{\beta},\frac{1}{2(\beta-1)}\Big]\\
(T_{0}^{k}\circ T_{1})\Big(\frac{1}{\beta(\beta-1)}\Big)&\in\Big[\frac{1}{2(\beta-1)},\frac{1}{\beta(\beta-1)}\Big).\nonumber
\end{align} We emphasise that for any $\beta\in(1,2)$ the point $\frac{1}{2(\beta-1)}$ is the midpoint of the interval $S$ and is thus always in the interior of $S$. Equation (\ref{Cross over}) is equivalent to $U_{\beta,0}(\frac{1}{\beta})$ being contained in the left hand side of $S$, and $U_{\beta,1}(\frac{1}{\beta(\beta-1)})$ being contained in the right hand side of $S$. As such the two orbits cross over when they return to $S$.

The cross over property described by equation (\ref{Cross over}) implies
\begin{equation}
\label{Cross over consequence}
(T_{1}^{k}\circ T_{0})(B_{k+1}^0)\cup (T_{0}^{k}\circ T_{1})(B_{k+1}^1)=S.
\end{equation}Moreover, for any $i\geq k+1$ we have
\begin{equation}
\label{Full for large times}
(T_{1}^{i}\circ T_{0})(B_{i+1}^0)=S
\end{equation}
With the identities (\ref{Cross over consequence}) and (\ref{Full for large times}) we may now prove Theorem \ref{Restricted omega}.

\begin{proof}[Proof of Theorem \ref{Restricted omega}]
Let $\beta\in(\gamma_{k},\eta_{k}]$ and let us fix a sequence of return times $(j_{i})\in R_{\beta}^{\mathbb{N}}=\{k+1,k+2,\ldots\}^{\mathbb{N}}.$ We will construct a set $J,$ such that for any $x\in J$ there exists a sequence $\omega$ satisfying $r_{i}(\omega,x)=j_{i}$ for all $i\in\mathbb{N}.$ We construct $J$ by building a sequence of levels $J_{1},J_{2},\ldots$. Each $J_{i}$ will denote a finite collection of compact intervals $\{\mathcal{I}_{l}^{i}\}_{l=1}^{2^{i}}$.  Moreover,
\begin{equation}
\label{Inclusion}
\bigcup_{l=1}^{2^{i+1}}\mathcal{I}_{l}^{i+1} \subseteq \bigcup_{l=1}^{2^{i}}\mathcal{I}_{l}^{i}
 \end{equation} for each $i=1,2,\ldots$. Thus $$J=\bigcap_{i=1}^{\infty}\bigcup_{l=1}^{2^{i}}\mathcal{I}_{l}^{i}$$ is nonempty, and as we will see, for each $x\in J$ there exists an $\omega\in \Omega$ such that $r_{i}(\omega,x)=j_{i}$ for $i=1,2,\ldots.$ We emphasise that in our construction not every $\mathcal{I}_{j}^{i}$ will necessarily be nonempty.

For each level $J_{i}$ it is useful to define a collection of maps $M_{i}=\{f_{l}^{i}\}^{2^{i}}_{l=1}$. Each $f_{l}^{i}$ will be a map from $\mathcal{I}_{l}^{i}$ into $S$. These maps will also have the property that
\begin{equation}
\label{covering equation}
\bigcup_{l=1}^{2^{i}}f_{l}^{i}(\mathcal{I}_{l}^{i})=S.
\end{equation}
We start by letting
$$J_{1}=\{B_{j_{1}}^{0},B_{j_{1}}^{1}\} \textrm{ and } M_{1}=\{T_{1}^{j_{1}-1}\circ T_{0},T_{0}^{j_{1}-1}\circ T_{1}\}.$$ By Equations (\ref{Cross over consequence}) and (\ref{Full for large times}) we have $$(T_{1}^{j_{1}-1}\circ T_{0})(B_{j_{1}}^{0})\cup (T_{0}^{j_{1}-1}\circ T_{1})(B_{j_{1}}^{1})=S$$
So we satisfy (\ref{covering equation}) when $i=1$. Assume we have constructed $J_{i}$ and $M_{i}$ for $1\leq i\leq N,$ and (\ref{Inclusion}) holds for $1\leq i\leq N-1,$ and (\ref{covering equation}) holds for $1\leq i\leq N$. We now construct $J_{N+1}$ and $M_{N+1}.$ To each $f_{l}^{N}\in M_{N}$ we associate the compact intervals $(f_{l}^{N})^{-1}(B_{j_{N+1}}^{0})$ and $(f_{l}^{N})^{-1}(B_{j_{N+1}}^{1}),$ the set of these new intervals is our $J_{N+1}$. By (\ref{covering equation}) this  collection of intervals $\{(f_{l}^{N})^{-1}(B_{j_{N+1}}^{0}),(f_{l}^{N})^{-1}(B_{j_{N+1}}^{1})\}$ is nonempty. Each $f_{l}^{N}$ is a map from $\mathcal{I}_{l}^{N}$ into $S$, thus $(f_{l}^{N})^{-1}(B_{j_{N+1}}^{1})\subseteq \mathcal{I}_{l}^{N}$ and we have that equation (\ref{Inclusion}) holds for $i=N$.

To each $(f_{l}^{N})^{-1}(B_{j_{N+1}}^{0})$ we associate the map $(T_{1}^{j_{N+1}-1}\circ T_{0})\circ f_{l}^{N},$ and to each $(f_{l}^{N})^{-1}(B_{j_{N+1}}^{1})$ we associate the map $(T_{0}^{j_{N+1}-1}\circ T_{1})\circ f_{l}^{N}$ respectively. This collection of maps is our new $M_{N+1}.$

Moreover
\begin{align*}
&\Big(\bigcup_{l=1}^{2^{N}}((T_{1}^{j_{N+1}-1}\circ T_{0})\circ f_{l}^{N})\circ (f_{l}^{N})^{-1}(B_{j_{N+1}}^{0})\Big)\cup \Big(\bigcup_{l=1}^{2^{N}}((T_{0}^{j_{N+1}-1}\circ T_{1})\circ f_{l}^{N})\circ (f_{l}^{N})^{-1}(B_{j_{N+1}}^{1})\Big)\\
&=(T_{1}^{j_{N+1}-1}\circ T_{0})\Big(\bigcup_{l=1}^{2^{N}}f_{l}^{N}(f_{l}^{N})^{-1}(B_{j_{N+1}}^{0})\Big)\cup (T_{0}^{j_{N+1}-1}\circ T_{1})\Big(\bigcup_{l=1}^{2^{N}}f_{l}^{N}(f_{l}^{N})^{-1}(B_{j_{N+1}}^{1})\Big)\\
&=(T_{1}^{j_{N+1}-1}\circ T_{0})(B_{j_{N+1}}^{0})\cup (T_{0}^{j_{N+1}-1}\circ T_{1})(B_{j_{N+1}}^{1})\,\, (\textrm{ By }(\ref{covering equation}) \textrm{ for }i=N)\\
&=S\,\, (\textrm{ By }(\ref{Cross over consequence})  \textrm{ and } (\ref{Full for large times})).
\end{align*}
Therefore we satisfy (\ref{covering equation}) for $i=N+1$. As such we can repeat the above steps indefinitely and $J_{i}$ and $M_{i}$ are well defined for all $i\in\mathbb{N}$ and satisfy equations (\ref{Inclusion}) and (\ref{covering equation}). This implies that the set $J$ is well defined and nonempty.

It is not immediately obvious why an $x\in J$ admits an $\omega\in\Omega$ such that $r_{i}(\omega,x)=j_{i}$ for all $i\geq 1$. We now explain why. If $x\in J,$ then by our construction for each $n\in\mathbb{N}$ there exists $(\omega_{i}^{n})_{i=1}^{n}\in \{0,1\}^{n}$ such that
\begin{equation}
\label{Steps}
(T_{\overline{\omega_{i}^{n}}}^{j_{i}}\circ T_{\omega_{i}^{n}})\circ \cdots \circ  (T_{\overline{\omega_{1}^{n}}}^{j_{1}}\circ T_{\omega_{1}^{n}})(x)\in S
 \end{equation}for all $1\leq i\leq n$. We identify the finite sequence $(\omega_{i}^{n})$ with the infinite sequence $\upsilon_{n}=(\omega_{1}^{n},\ldots,\omega_{n}^{n},(0)^{\infty}).$ We equip $\Omega$ with the usual metric $d(\cdot,\cdot)$ where $d((\epsilon_{i}),(\delta_{i}))=2^{-n((\epsilon_{i}),(\delta_{i}))}$ where $n(x,y)=\inf\{i:\epsilon_{i}\neq \delta_{i}.$ With respect to this metric $\Omega$ is a compact metric space, thus there exists $\upsilon\in\Omega$ and a subsequence of the $(\upsilon_{n})$ such that $\upsilon_{n_{k}}\to \upsilon.$ This $\upsilon$ has the property that
 \begin{equation}
 \label{upsilon steps}
 (T_{\overline{\upsilon_{i}}}^{j_{i}}\circ T_{\upsilon_{i}})\circ \cdots \circ  (T_{\overline{\upsilon_{1}}}^{j_{1}}\circ T_{\upsilon_{1}})(x)\in S
  \end{equation}for all $i\in \mathbb{N}$. (\ref{upsilon steps}) is a consequence of $\upsilon$ being the limit of sequences satisfying $(\ref{Steps}).$ Clearly (\ref{upsilon steps}) implies that $r_{i}(\upsilon,x)=j_{i}$ for all $i\in\mathbb{N}$.
\end{proof}
\begin{Remark}
We end this section by pointing out that there are non trivial examples of $\beta\in(1,2)$ for which there exists $(j_{i})\in R_{\beta}^{\mathbb{N}}$ and no $x\in S$ and $\omega\in\Omega$ for which $r_{i}(\omega,x)=j_{i}$ for all $i\in\mathbb{N}$. For example take $\beta=1.754$. We chose $\beta$ to be this value because it is slightly less than $\alpha_{2}.$ Thus $T_{1}\circ T_{0}(\frac{1}{\beta})\in S,$ but it is only slightly less than the right end point of the switch region. Clearly $R_{\beta}:=\{2,3,\ldots\}.$ However, any point that can have a return time two gets mapped close to the endpoints of $S$ under the corresponding map. Being close to the endpoints of the switch suggests either a large return time or a small return time. This is the case for $\beta=1.754$, and a simple calculation shows that it is not possible for $r_{1}(\omega,x)=2$ and $r_{2}(\omega,x)=3.$
\end{Remark}
\section{Proof of Theorem \ref{Luroth return map}}

Let us begin our proof of Theorem \ref{Luroth return map} by defining the set $M$ that appear in its statement. Let $$M:=\Big\{\beta\in(1,2): \textrm{card } \Sigma_{\beta}(1)=1\Big\}\cup \Big\{\beta\in(1,2): U_{\beta,0}\Big(\frac{1}{\beta}\Big)\in\Big\{\frac{1}{\beta},\frac{1}{\beta(\beta-1)}\Big\}\Big\}.$$ The first set in this union is the set of univoque bases, the study of this set is classical within expansions in noninteger bases, we refer the reader to the following papers for more on this subject \cite{dVK,EHJ,EJ,KL}. In \cite{EJ} Erd\H os and Jo\'o showed that the set of univoque bases has Hausdorff dimension $1$ and Lebesgue measure zero. The second set in the above union is a countable set of algebraic numbers, thus $M$ has Hausdorff dimension $1$ and Lebesgue measure zero. It is worth noting that if $\beta\in \{\beta\in(1,2): U_{\beta,0}(\frac{1}{\beta})\in\{\frac{1}{\beta},\frac{1}{\beta(\beta-1)}\}\}$ then $\textrm{card } \Sigma_{\beta}(1)=\aleph_{0}$. The important observation to make from the definition of $M$ is that the following statement holds
$$\beta\in M\iff \frac{1}{\beta} \textrm{ and }\frac{1}{\beta(\beta-1)}\textrm{ are never mapped into the interior of }S.$$ This property will be sufficient to prove that both $U_{\beta,0}$ and $U_{\beta,1}$ are GLSTs. Our proof of Theorem \ref{Luroth return map} is split over the following propositions.

\begin{Proposition}
\label{Not Luroth prop}
If $\beta\notin M$ then $U_{\beta,0}$ and $U_{\beta,1}$ are not GLSTs.
\end{Proposition}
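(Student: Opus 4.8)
The plan is to argue contrapositively via the structure forced on a GLST by its defining properties. Recall the characterisation established just before the proposition: $\beta\in M$ if and only if neither $\frac{1}{\beta}$ nor $\frac{1}{\beta(\beta-1)}$ is ever mapped into the \emph{interior} of $S$. So if $\beta\notin M$, then (after possibly passing from $U_{\beta,0}$ to $U_{\beta,1}$ using the symmetry $x\mapsto \frac{1}{\beta-1}-x$ which conjugates the two maps) there is some least $N\geq 1$ with $(T_1^{N-1}\circ T_0)(\frac{1}{\beta})$ landing strictly inside $S$; equivalently $U_{\beta,0}(\frac{1}{\beta})$ lies in the open interval $(\frac{1}{\beta},\frac{1}{\beta(\beta-1)})$. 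I would then examine the leftmost branch of $U_{\beta,0}$, namely the branch on the cell $B_{k+1}^{0}=\{x\in S: r_1((0)^\infty,x)=k+1\}$ where $k+1=\min\mathcal{R}_\beta$; on this cell $U_{\beta,0}$ acts as $T_1^{k}\circ T_0$, which is affine and orientation preserving with left endpoint $\frac1\beta$ sent to $U_{\beta,0}(\frac1\beta)$.

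The key step is to observe what a GLST demands of each branch: by property (3) in the definition, every interval $I_n$ of the partition must be mapped \emph{onto all of} $S=[a,b]$ by an orientation-preserving affine map. In particular the branch containing the left endpoint $\frac1\beta$ of $S$ must send $\frac1\beta$ to the left endpoint of $S$, i.e.\ we would need $U_{\beta,0}(\frac1\beta)=\frac1\beta$. But $\beta\notin M$ forces $U_{\beta,0}(\frac1\beta)$ (for the relevant choice among the two induced maps) either to lie in the open interior of $S$ — contradicting that the leftmost branch is a full branch reaching the left endpoint — or the analogous failure occurs at the right endpoint for $U_{\beta,1}$. More precisely: if $U_{\beta,0}(\frac1\beta)\in(\frac1\beta,\frac1{\beta(\beta-1)})$, then the image of the cell $B_{k+1}^0$ under $U_{\beta,0}$ is a proper subinterval $[\,U_{\beta,0}(\tfrac1\beta),\,\text{something}\,]$ whose left end is strictly bigger than $\frac1\beta$, so no sub-branch of $U_{\beta,0}$ restricted to $B_{k+1}^0$ is a full branch onto $S$, and the complementary piece $(\frac1\beta,U_{\beta,0}(\frac1\beta))$ of $S$ is missed — contradicting property (2) (the branch images must cover $S$ up to measure zero, and here a genuine interval is uncovered by this branch with nothing to its left to fill it).

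I would then need to rule out the remaining escape route, that $U_{\beta,0}(\tfrac1\beta)$ could equal the \emph{right} endpoint $\frac1{\beta(\beta-1)}$: but that case puts $\beta$ into the second set defining $M$, hence $\beta\in M$, contrary to hypothesis. So the only surviving possibility under $\beta\notin M$ is that at least one of the two endpoints of $S$ fails to be a fixed point of the leftmost/rightmost branch of $U_{\beta,0}$/$U_{\beta,1}$ respectively while also not escaping, and a short monotonicity-and-covering argument (the affine branch on the extremal cell, combined with the fact that there is no cell to its left) shows the partition cannot cover $S$, so the map is not a GLST. One checks that the argument for $U_{\beta,1}$ is the mirror image under the reflection symmetry, so both maps fail simultaneously.

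\textbf{Main obstacle.} The delicate point is organising the case analysis cleanly: $\beta\notin M$ is a disjunction over \emph{which} endpoint misbehaves and \emph{at what iterate} its orbit first enters $\mathrm{int}(S)$, and one must translate "enters the interior of $S$" into a concrete failure of the covering/fullness condition for a specific branch of the induced map. The cleanest route is probably to locate the single extremal branch of $U_{\beta,0}$ (resp.\ $U_{\beta,1}$) and show directly that its image is an interval not abutting the correct endpoint of $S$, so that property (2) is violated; the bookkeeping that this extremal branch really is governed by $T_1^{k}\circ T_0$ with the stated endpoint behaviour is the part that needs care, and it is where the characterisation $\beta\notin M\iff$ "an endpoint hits $\mathrm{int}(S)$" does the real work.
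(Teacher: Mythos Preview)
Your proposal is correct and follows essentially the same approach as the paper: both hinge on the observation that when $\beta\notin M$ one has $U_{\beta,0}(\tfrac{1}{\beta})\in S^{0}$, so the branch of $U_{\beta,0}$ at the left endpoint of $S$ is not full, which is incompatible with the GLST axioms; the argument for $U_{\beta,1}$ is obtained by the reflection symmetry $x\mapsto \tfrac{1}{\beta-1}-x$.

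Two minor imprecisions in your write-up are worth flagging. First, you describe the first return of $\tfrac{1}{\beta}$ as $(T_1^{N-1}\circ T_0)(\tfrac{1}{\beta})$ and identify the leftmost cell with $B_{k+1}^{0}$; this is only literally correct for $\beta$ in certain sub-ranges, since in general (cf.\ the paper's equation for $U_{\beta,0}$ as an alternating composition $T_{\omega_i}^{n_i}\circ\cdots\circ T_1^{n_1}\circ T_0$) the orbit of $\tfrac{1}{\beta}$ may hop back and forth across $S$ several times before its first return. This does not affect your argument, since all you really use is that the leftmost branch is affine, increasing, and sends $\tfrac{1}{\beta}$ strictly into the interior of $S$. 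Second, your appeal to property~(2) is phrased in terms of \emph{images} (``a genuine interval is uncovered by this branch''), whereas property~(2) concerns the \emph{domain}: the clean contradiction is that no GLST partition interval $I_n$ can lie inside the leftmost branch (its image would miss $\tfrac{1}{\beta}$), so a set of positive measure in the domain goes uncovered. The paper's proof sidesteps both points by simply asserting that a GLST has only full branches and the leftmost branch here is incomplete.
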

\begin{proof}
If $\beta\notin M$ then $U_{\beta,0}(\frac{1}{\beta})\in S^{0}.$ In which case at the left endpoint of $S$ the graph of $U_{\beta,0}$ has an incomplete branch. Thus it is not possible that $U_{\beta,0}$ is a GLST as all of the branches are full for this class of transformation. The proof that $U_{\beta,1}$ is not a GLST is similar and appeals to the fact that $U_{\beta,1}(\frac{1}{\beta(\beta-1)})\in S^{0}.$
\end{proof}

\begin{Proposition}
\label{Luroth prop}
If $\beta\in M$ then $U_{\beta,0}$ and $U_{\beta,1}$ are GLSTs.
\end{Proposition}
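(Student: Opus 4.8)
The plan is to show that when $\beta\in M$, the maps $U_{\beta,0}$ and $U_{\beta,1}$ have only full (surjective linear orientation-preserving) branches, and that the branch domains form a countable partition of $S$ up to a Lebesgue-null set; properties (1)--(3) in the definition of a GLST then follow. I will treat $U_{\beta,0}$ in detail, the case of $U_{\beta,1}$ being symmetric under the reflection $x\mapsto \frac{1}{\beta-1}-x$ (which interchanges $T_0$ and $T_1$ and swaps $\frac{1}{\beta}$ with $\frac{1}{\beta(\beta-1)}$).

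First I would describe the branch structure of $U_{\beta,0}$. Fixing $\omega=(0)^\infty$, the first return map applied at $x\in S$ uses $T_0$ once (since $x\ge \tfrac{1}{\beta}>0$ would need care — actually $x\in S$ lies in the left part, so $T_0$ is applied), then repeatedly $T_1$ until the orbit re-enters $S$; thus on the set where the return time is $i$ we have $U_{\beta,0}(x)=(T_1^{i-1}\circ T_0)(x)$, and the natural candidate branch domains are the sets $B_i^0$ from \eqref{B0*}. The key point is that, because $\beta\in M$ means the orbits of $\frac{1}{\beta}$ and $\frac{1}{\beta(\beta-1)}$ never enter the interior $S^0$ of $S$, each branch $(T_1^{i-1}\circ T_0)$ restricted to $B_i^0$ is \emph{full}: its image is all of $S$ (compare \eqref{Full 1}, which is exactly this statement in the range $\beta\in(\alpha_k,\gamma_k]$, and which persists here precisely because nothing lands in $S^0$). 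Indeed the endpoints of $B_i^0$ are mapped to points of $\{\frac{1}{\beta},\frac{1}{\beta(\beta-1)}\}$ or to the boundary of $S$, never strictly inside, so there is no incomplete branch. Each branch is linear with slope $\beta^i>0$, hence orientation-preserving, which gives property (3) with the correct normalisation constant $r_n-l_n=\beta^{-i}(\text{length of }S)$.

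Next I would verify the partition property. The sets $B_i^0$, $i\in\mathcal{R}_\beta$, are pairwise disjoint by construction (they are determined by distinct return times), giving property (1). For property (2), I would argue that $\bigcup_i B_i^0$ covers $S$ up to a set of Lebesgue measure zero: the complement consists of points whose $(0)^\infty$-orbit never returns to $S$ or returns only after passing through the boundary in a degenerate way, and by the Remark in the introduction (Sidorov's result) this set has full measure complement, i.e. the union has full measure in $S$; summing the branch lengths then forces $\sum_i(r_i-l_i)=\mathrm{length}(S)=b-a$. Alternatively, and more self-containedly, one can compute $\sum_i \beta^{-i}\,\mathrm{length}(S)$ directly knowing $\mathcal{R}_\beta=\{m,m+1,\dots\}$: the geometric series $\sum_{i\ge m}\beta^{-i}=\frac{\beta^{-m}}{1-\beta^{-1}}$, and the defining relation for $m$ (the least return time, governed by where $(T_1^{m-1}\circ T_0)(\frac{1}{\beta})$ first exceeds $\frac{1}{\beta(\beta-1)}$, i.e. by $\alpha_{m-1}<\beta$) pins this down to exactly $1$. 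I would present whichever of these is cleaner; the measure-theoretic route is shortest.

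The main obstacle I anticipate is the careful bookkeeping at the \emph{boundary cases} — namely when $\beta$ lies in $M$ because $U_{\beta,0}(\frac1\beta)\in\{\frac1\beta,\frac1{\beta(\beta-1)}\}$ (the eventually-periodic / endpoint-hitting situation) rather than because $\beta$ is univoque. In that scenario one of the $B_i^0$ may be a half-open rather than closed interval, or the orbit of $\frac1\beta$ may be eventually periodic so that infinitely many branch-endpoint computations reduce to the same finite orbit data; I need to check that the branch whose closure contains $\frac1\beta$ is still full on its (possibly half-open) domain and that no branch is truncated. The GLST definition explicitly allows the $I_n$ to be of any of the four interval types, so this is permissible, but it requires showing that the "missing" endpoint is exactly a single point (hence Lebesgue-null) and that surjectivity onto $S$ is unaffected. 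Once this case-check is dispatched, the remaining verification of (1)--(3) is routine, and the proof concludes by invoking the symmetry to handle $U_{\beta,1}$.
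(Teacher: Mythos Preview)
There is a genuine gap in your proposal: you have the wrong formula for $U_{\beta,0}$. You assert that after the initial $T_0$ one applies $T_1$ repeatedly until the orbit re-enters $S$, giving $U_{\beta,0}(x)=(T_1^{\,i-1}\circ T_0)(x)$ on each branch. This is false for every $\beta\in M$ with $\beta>\frac{1+\sqrt{5}}{2}$, which is where almost all of $M$ lives. For such $\beta$ the map $T_0$ sends $S$ into $[1,\frac{1}{\beta-1}]$, and repeated application of $T_1$ can carry the orbit \emph{over} $S$ and into $[0,\frac{1}{\beta})$ without ever landing in $S$; the lazy dynamics then switches to $T_0$, which may in turn hop back over $S$ to the right, and so on. The correct description of a branch (the paper's equation~\eqref{Hops equation}) is
\[
U_{\beta,0}(x)=(T_{\omega_i}^{\,n_i}\circ\cdots\circ T_1^{\,n_1}\circ T_0)(x),
\]
with the $\omega_j$ alternating between $1$ and $0$. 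The branches are therefore indexed not by a single return time $i\in\mathcal{R}_\beta$ but by these finite alternating words; in particular several distinct branches can share the same return time, and the sets $B_i^0$ of \eqref{B0*} do \emph{not} partition $S$ once $\beta>\frac{1+\sqrt{5}}{2}$ (the paper says this explicitly just after \eqref{Full 2}). Your own ``self-contained'' check should have flagged this: $\sum_{i\ge m}\beta^{-i}=\beta^{1-m}/(\beta-1)$ equals $1$ only when $\beta^{m}(\beta-1)=1$, i.e.\ essentially only at the golden ratio among the $\beta$'s of interest.

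What the argument actually requires is to show that every branch of the alternating form above is full. The paper does this by tracking the successive ``hop intervals'' $\mathcal{I}_j$ between consecutive crossings and observing that their endpoints always lie in a set $E$ built from the forward $G_\beta$-orbits of $1$ and $\frac{1}{\beta-1}-1$ together with the $T_0$- and $T_1$-preimages of the switch endpoints. The hypothesis $\beta\in M$ is exactly what guarantees that no point of $E$ lies in the interior of $S$ (or of any $C_n$, $D_n$), and this is what forces each branch domain to coincide with the full preimage $(T_{\omega_i}^{\,n_i}\circ\cdots\circ T_1^{\,n_1}\circ T_0)^{-1}(S)$. Your intuition that ``$\beta\in M$ means the relevant orbits never enter $S^0$'' is correct and is the heart of the matter, but it must be applied to the endpoints of these iterated hop intervals, not merely to the single family $(T_1^{\,i-1}\circ T_0)^{-1}(S)$.
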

We will only show that if $\beta\in M$ then $U_{\beta,0}$ is a GLST, the proof for $U_{\beta,1}$ being analogous. Moreover, as we previously demonstrated in Example \ref{Golden ratio example} that the maps $U_{\beta,0}$ and $U_{\beta,1}$ were GLSTs for $\beta=\frac{1+\sqrt{5}}{2}$ we restrict our attention to the interval $(\frac{1+\sqrt{5}}{2},2),$ where the rest of the set $M$ exists.

Before proceeding with our proof that $U_{\beta,0}$ is a GLST we make several observations. Let $\beta\in(\frac{1+\sqrt{5}}{2},2)$ and $x\in S$ be such that $U_{\beta,0}$ is well defined, then
\begin{equation}
\label{Hops equation}
U_{\beta,0}(x)=(T_{\omega_{i}}^{n_{i}}\circ \cdots  \circ T_{1}^{n_{1}}\circ T_{0})(x)
 \end{equation}for some $\omega_{i}\in\{0,1\}$ that alternate digits with $\omega_{1}=1$. Equation (\ref{Hops equation}) holds because the map $T_{0}$ maps every element of $S$ outside of $S$. The quantity $i-1$ is the number of times $x$ jumps over $S$ before eventually being mapped inside. Note that if $i$ is even then $\omega_{i}=0$ and if $i$ is odd then $\omega_{i}=1.$

Let $$C_{n}:=T^{-n}_{0}(S) \textrm{ and } D_{n}:=T^{-n}_{1}(S)$$ where $n\in\mathbb{N}$. Equation (\ref{Hops equation}) demonstrates that if $U_{\beta,0}(x)$ is well defined then $x$ must eventually map into a $C_{n}$ or a $D_{n}$. Note that for $\beta\in(\frac{1+\sqrt{5}}{2},2)$ the $C_{n}$ are all disjoint and contained in the interval $(0,\frac{1}{\beta})$, and similarly the $D_{n}$ are all disjoint and contained in $(\frac{1}{\beta(\beta-1)},\frac{1}{\beta-1})$

It is instructive here to make a final notational remark before we give our proof. As we will see, the proof of Proposition \ref{Luroth prop} relies heavily on understanding the trajectories of certain intervals under certain maps and where they lie relative to $C_{n},D_{n}$ and $S$. Often we will be in a situation where a relation ($I\cap J=\emptyset$, $I\subseteq J$) is true only if we ignore the endpoints of these intervals. For ease of exposition instead of repeatedly emphasising the fact that this relation holds modulo the endpoints we will simply state that the equation holds. This is technically not correct, but our proof still holds and is far more succinct by adopting this convention.

\begin{proof}[Proof of Proposition \ref{Luroth prop}]
To prove $U_{\beta,0}$ is a GLST it suffices to show that for any $x\in S$ such that $U_{\beta,0}(x)$ is well defined then we have
\begin{equation}
\label{Sufficient equation}
\{y\in S: U_{\beta,0}(y)=(T_{\omega_{i}}^{n_{i}}\circ \cdots  \circ T_{1}^{n_{1}}\circ T_{0})(y)\}=(T_{\omega_{i}}^{n_{i}}\circ \cdots  \circ T_{1}^{n_{1}}\circ T_{0})^{-1}(S).
 \end{equation}
 Where we have assumed $U_{\beta,0}(x)=(T_{\omega_{i}}^{n_{i}}\circ \cdots  \circ T_{1}^{n_{1}}\circ T_{0})(x)$. We now explain why Equation (\ref{Sufficient equation}) implies $U_{\beta,0}$ is a GLST. The intervals on the left hand side of equation (\ref{Sufficient equation}) are all disjoint, thus we satisfy part $(1)$ of the definition of a GLST. By Sidorov's result we know that for Lebesgue almost every $x\in S$ the map $U_{\beta,0}(x)$ is well defined, thus the lengths of the intervals on the left hand side of equation (\ref{Sufficient equation}) sum up to equal the length of $S$ and we satisfy part $(2)$ of the definition of a GLST. Lastly, the right hand side of equation (\ref{Sufficient equation}) demonstrates that $U_{\beta,0}$ restricted to this interval is surjective onto $S,$ since there is a unique surjective linear orientation preserving map from this interval onto $S$ we also satisfy part $(3)$ of the definition of a GLST.

We begin with the most simple case, we assume that $U_{\beta,0}(x)=(T_{1}^{n_{1}}\circ T_{0})(x)$, i.e. $T_{0}(x)\in D_{n_{1}}$. Importantly, since $\beta\in M$ we know that $1 \notin D_{n_{1}}^{0}$.
Thus $T_{0}(S)\cap D_{n_{1}}=[1,\frac{1}{\beta-1}]\cap D_{n_{1}}=D_{n_{1}}.$ Therefore $T_{0}^{-1} (D_{n_{1}})\subseteq S$ and any $y$ in this interval satisfies $U_{\beta,0}(y)= (T_{1}^{n_{1}}\circ T_{0})(y)$.
This implies that
\begin{equation}
\label{Full branch 0}
\{y\in S:U_{\beta,0}(x)=(T^{n_{1}}_{1}\circ T_{0})(y)\}= (T^{n_{1}}_{1}\circ T_{0})^{-1}(S).
\end{equation}

It remains to show that equation (\ref{Sufficient equation}) holds in the general case. Obviously
\begin{equation}
\label{Obvious inclusion}
\{y\in S: U_{\beta,0}(y)=(T_{\omega_{i}}^{n_{i}}\circ \cdots  \circ T_{1}^{n_{1}}\circ T_{0})(y)\}\subseteq(T_{\omega_{i}}^{n_{i}}\circ \cdots  \circ T_{1}^{n_{1}}\circ T_{0})^{-1}(S).
\end{equation}So we have to show that the opposite inclusion holds, for this we examine the formula for $U_{\beta,0}$ more closely. We assume $U_{\beta,0}(x)=(T_{\omega_{i}}^{n_{i}}\circ \cdots  \circ T_{1}^{n_{1}}\circ T_{0})(x)$ for some $i\geq 2$.
Since $i\geq 2$ we have $T_{0}(x)$ is contained in a connected component of $[1,\frac{1}{\beta-1})\setminus \cup_{n=1}^{\infty}D_{n}.$ Let us denote this interval by $\mathcal{I}_{1}.$ We also let $$E:=\Big\{T_{0}^{-n}\Big(\frac{1}{\beta}\Big),  T_{0}^{-n}\Big(\frac{1}{\beta(\beta-1)}\Big),T_{1}^{-n}\Big(\frac{1}{\beta}\Big),  T_{1}^{-n}\Big(\frac{1}{\beta(\beta-1)}\Big),G_{\beta}^{n}(1),G_{\beta}^{n}\Big(\frac{1}{\beta-1}-1\Big):n\geq 0 \Big\}.$$ Here $G_{\beta}$ is the greedy map defined earlier. Since $\beta\in M$ no element of $E$ is contained in the interior of a $C_{n},$ a $D_{n}$, or $S$.

Importantly $\mathcal{I}_{1}=(a_{1},b_{1})$ where $a_{1},b_{1}\in E.$ In this case either $$(a_{1},b_{1})=\Big(1,T_{1}^{-n_{1}}\Big(\frac{1}{\beta}\Big)\Big)\textrm{ or } (a_{1},b_{1})=\Big(T_{1}^{-(n_{1}-1)}\Big(\frac{1}{\beta(\beta-1)}\Big),T_{1}^{-n_{1}}\Big(\frac{1}{\beta}\Big)\Big).$$ Therefore
$$T_{1}^{k}(\mathcal{I}_{1})\cap S=\emptyset \textrm{ for }1\leq k\leq n_{1}-1 \textrm{ and }T_{1}^{n_{1}}(\mathcal{I}_{1})\subseteq \Big(\frac{2-\beta}{\beta-1},\frac{1}{\beta}\Big).$$
The endpoints of $T_{1}^{n_{1}}(\mathcal{I}_{1})$ are elements of $E$ and are therefore not contained in the interior of any $C_{n}$. Either $(T^{n_1}_{1}\circ T_{0})(x)\in C_{n}$ for some $n$ or maybe $(T^{n_1}_{1}\circ T_{0})(x)\in T_{1}^{n_{1}}(\mathcal{I}_{1})\setminus \cup_{n=1}^{\infty} C_{n}$. If $(T^{n_1}_{1}\circ T_{0})(x)\in  T_{1}^{n_{1}}(\mathcal{I}_{1})\setminus \cup_{n=1}^{\infty} C_{n}$ then let the connected component it is contained in be denoted by $\mathcal{I}_{2}.$ Let $\mathcal{I}_{2}=(a_{2},b_{2})$ then again $a_{2},b_{2}\in E.$ In which case
\begin{equation}
T_{0}^{k}(\mathcal{I}_{2})\cap S=\emptyset \textrm{ for }1\leq k\leq n_{2}-1 \textrm{ and } T_{0}^{n_{2}}(\mathcal{I}_{2})\subseteq \Big(\frac{1}{\beta(\beta-1)},1\Big)
\end{equation}The endpoints of $T_{0}^{n_{2}}(\mathcal{I}_{2})$ are again contained in $E$ and therefore do not intersect the interior of any $D_{n}.$ The point $x$ has either been mapped into a $D_{n}$ or is contained in a connected component of $T_{0}^{n_{2}}(\mathcal{I}_{2})\setminus \cup_{n=1}^{\infty}D_{n}$. If it is contained in a connected component of $T_{0}^{n_{2}}(\mathcal{I}_{2})\setminus \cup_{n=1}^{\infty}D_{n}$ then we repeat the previous steps. Eventually $x$ is mapped into either $C_{n_i}$ or $D_{n_i}$ and our algorithm terminates. Without loss of generality we assume $x$ is eventually mapped into $D_{n_i}$. The above algorithm yields a finite sequence of intervals $(\mathcal{I}_{j})_{j=1}^{i-1}$ which satisfy the following properties:
\begin{enumerate}
  \item $\mathcal{I}_{1}\subseteq T_{0}(S).$
  \item For $1\leq j\leq i-1$ $$T_{\omega_{j}}^{k}(\mathcal{I}_{n_{j}})\cap S=\emptyset \textrm{ for }1\leq k\leq n_{j} $$
  \item For $1\leq j\leq i-2$ we have $\mathcal{I}_{j+1}\subseteq T_{\omega_{j}}^{n_{j}}(\mathcal{I}_{j})$
  \item $$D_{n_i}\subseteq T_{\omega_{i-1}}^{n_{i-1}}(\mathcal{I}_{i-1}).$$
\end{enumerate}
Where in the above $\omega_{j}=0$ if $j$ is even and $\omega_{j}=1$ if $j$ is odd. These properties have the following consequences:
\begin{enumerate}
\setcounter{enumi}{4}
 \item $(T_{\omega_{i}}^{n_{i}})^{-1}(S)\subseteq \mathcal{I}_{i-1}$
 \item For $1\leq j\leq i-1$ $$(T_{\omega_{i}}^{n_{i}}\circ \cdots \circ T_{\omega_{j}}^{k})^{-1}(S)\cap S=\emptyset\textrm{ for }1\leq k\leq n_{j}.$$
 \item For $1\leq j\leq i-1$ $$(T_{\omega_{i}}^{n_{i}}\circ \cdots \circ T_{\omega_{j}}^{n_{j}})^{-1}(S)\subseteq \mathcal{I}_{n_{j}}$$
\item $$(T_{\omega_{i}}^{n_{i}}\circ \cdots \circ T_{1}^{n_{1}}\circ T_{0})^{-1}(S)\subseteq S.$$
\end{enumerate}
Property $(8)$ states that $(T_{\omega_{i}}^{n_{i}}\circ \cdots  \circ T_{1}^{n_{1}}\circ T_{0})^{-1}(S)\subseteq S$. Moreover, properties $(5)$, $(6)$ and $(7)$ imply that every $y\in(T_{\omega_{i}}^{n_{i}}\circ \cdots  \circ T_{1}^{n_{1}}\circ T_{0})^{-1}(S)$ satisfies $U_{\beta,0}(y)=(T_{\omega_{i}}^{n_{i}}\circ \cdots \circ T_{1}^{n_{1}}\circ T_{0})(y).$ Thus
$$(T_{\omega_{i}}^{n_{i}}\circ \cdots  \circ T_{1}^{n_{1}}\circ T_{0})^{-1}(S)\subseteq\{y\in S: U_{\beta,0}(y)=(T_{\omega_{i}}^{n_{i}}\circ \cdots  \circ T_{1}^{n_{1}}\circ T_{0})(y)\},$$ which when combined with equation (\ref{Obvious inclusion}) yields (\ref{Sufficient equation}).

\end{proof}


\begin{thebibliography}{100}
\bibitem{Baker} S. Baker, \textit{Generalised golden ratios over integer alphabets,} Integers 14(2014), Paper No A15.
\bibitem{BBDK} J. Barrionuevo, R, Burton, K. Dajani and C. Kraaikamp, \textit{Ergodic properties of generalized L\"uroth series,}
Acta Arith. 74 (1996), no. 4, 311–-327.
\bibitem{DK} K. Dajani and C. Kraaikamp, \textit{Random $\beta$-expansions,} Ergodic Theory Dynam. Systems 23 (2003), no. 2, 461-–479.
\bibitem{DdV1} K. Dajani and M. de Vries, \textit{Measures of maximal entropy for random $\beta$-expansions,} J. Eur. Math. Soc. (JEMS) 7 (2005), no. 1, 51-–68.
\bibitem{DdV2} K. Dajani and M. de Vries, \textit{Invariant densities for random $\beta$-expansions,} J. Eur. Math. Soc. (JEMS) 9 (2007), no. 1, 157–-176.
\bibitem{DaKa} Z. Dar\'{o}czy and I. Katai, \textit{Univoque sequences}, Publ. Math. Debrecen {\bf 42} (1993),  397--407.
\bibitem{dVK}  M. de Vries and V. Komornik,  \textit{Unique expansions of real numbers,} Adv. Math. 221
(2009), 390–-427.
\bibitem{EHJ} P. Erd\H{o}s, M. Horv\'ath, I. Jo\'o, \textit{On the uniqueness of the expansions $1 =\sum_{i=1}^{\infty}q^{-n_{i}},$}  Acta Math. Hungar. 58 (1991), no. 3-4, 333--342.
\bibitem{EJ} P. Erd\H os and I. Jo\'o, \textit{ On the number of expansions $1=\sum q^{-n_i}$}, Ann. Univ. Sci. Budapest {\bf 35} (1992), 129--132.
\bibitem{Kom} V. Komornik, \textit{Expansions in non-integer bases,} Integers 11B (2011), Paper No. A9, 30 pp. 11A63.
\bibitem{KL} V. Komorink and P. Loreti, \textit{Unique developments in noninteger bases,} Amer.
Math. Monthly, 105 (1998), 636–-639
\bibitem{Parry} W. Parry, \textit{On the $\beta$-expansions of real numbers}, Acta Math. Acad. Sci. Hung. {\bf 11} (1960) 401--416.
\bibitem{Renyi} A. R\'{e}nyi, \textit{Representations for real numbers and their ergodic properties}, Acta Math. Acad. Sci. Hung. {\bf 8} (1957) 477--493.
\bibitem{Sid} N. Sidorov, Almost every number has a continuum of $\beta$-expansions, Amer. Math. Monthly 110 (2003), no. 9, 838–-842.
\bibitem{Sid2} N. Sidorov, \textit{Arithmetic dynamics,} Topics in dynamics and ergodic theory, 145-–189, London Math. Soc. Lecture Note Ser., 310, Cambridge Univ. Press, Cambridge, 2003.


\end{thebibliography}
\end{document}